\theoremstyle{plain}
\newtheorem{thm}{Theorem}[section]
\newtheorem{lem}[thm]{Lemma}
\newtheorem{prop}[thm]{Proposition}
\newtheorem{cor}[thm]{Corollary}
\newtheorem{rem}[thm]{Remark}
\newtheorem{defn}[thm]{Definition}
\newtheorem{exmp}[thm]{Example}
\def\m{\mathfrak{m}}
\def\k{\mathbb{K}}
\newcommand \reg{\operatorname{reg}}
\newcommand \Tor{\operatorname{Tor}}
\newcommand \lcm{\operatorname{lcm}}
\newcommand \cha{\operatorname{char}}
\newcommand \pd{\operatorname{pd}}
\newcommand \ass{\operatorname{Ass}}
\newcommand \Min{\operatorname{Min}}
\newcommand \depth{\operatorname{depth}}
\newcommand \p{\mathfrak{p}}
\newcommand \K{\mathbb{K}}
\begin{document}
	\title{Properties of analogues of Frobenius powers of ideals}
	\author{Subhajit Chanda}
	\email{subhajitchandamath@gmail.com}
	\author{Arvind Kumar}
	\email{arvkumar11@gmail.com}
	\address{Department of Mathematics \\
		New Academic Complex, IIT Madras \\ Chennai \\
		Tamil Nadu, India-600036}

	\begin{abstract}
		Let $R=\K[X_1, \hdots , X_n ]$ be a polynomial ring over a field $\K$. We introduce an endomorphism $\mathcal{F}^{[m]}: R \rightarrow R $ and denote the image of an ideal $I$ of $R$ via this endomorphism as $I^{[m]}$ and call it to be the $m$\textit{-th square power} of $I$. In this article, we study some homological invariants of $I^{[m]}$ such as regularity, projective dimension, associated primes and depth for some families of ideals e.g. monomial ideals. 
	\end{abstract}
	\keywords{Frobenius power, square power, regularity, projective dimension, Betti numbers, depth and symbolic power}
	\thanks{Mathematics Subject Classification: 13A35, 13D02, 13F55}    
	\maketitle

	\section{Introduction}
	Let $R$ be a commutative Noetherian ring with unity. When  $\cha(R)$ is a prime number $p$, then for any positive integer $e$, one can consider Frobenius endomorphism $F^e : R\rightarrow R$ which is defined as $F^e(r)= r^{p^e}$ for $r \in R$. In this way, $R$ can be viewed as an $R$-module via $F^e$, and this  module is denoted by $R^e$.  For an ideal $I \subset R$ and $e \geq 1$, $I^{[p^e]}$ denotes the ideal generated by the $p^e$-th powers of elements of $I$ and it is called $p^e$\textit{-th Frobenius power} of $I$. From now, we fix $R=\K[X_1,\ldots,X_n]$ to be a polynomial ring over a field $\K$. Whenever we say $I \subset R$ is a homogeneous ideal, then we mean that $R$ is a standard graded polynomial ring and $I$ is an ideal generated by homogeneous elements in $R$. When $\cha(\K)=p,$
	Katzman \cite{katzman} conjectured that if $I \subset R$ and $J \subset R$ are homogeneous ideals, then $\reg( R/ (J+I^{[p^e]})) $ grow linearly in $p^e$.  Katzman \cite[Corollary 23]{katzman} proved this conjecture for certain classes of ideals (e.g., $I$ a homogeneous ideal and $J$ a monomial ideal). The behavior of $\reg(R/I^{[p^e]})$ has been of great interest throughout the last few decades, see \cite{brenner, chardin,  stefani}. Moreover, the growth of $\reg(R/I^{[p^e]})$ is of independent interest as this is connected to the localization of tight closure \cite{katzman}. In this article, we  give a precise formula for $\reg(R/I^{[p^e]})$ in terms of $\reg(R/I)$ for any homogeneous ideal $I \subset R$.  We study this problem in a more general setup as follows.    
	
	Let $R=\K[X_1,\ldots,X_n]$ be a polynomial ring over a field $\K$ (need not be of prime characteristic).  For any positive integer $m$, we define an endomorphism $\mathcal{F}^{[m]} : R\rightarrow R$ as $\mathcal{F}^{[m]}(f)= f^{[m]}$ for $f \in R$, where $f^{[m]}= f(X_1^m, \hdots , X_n^m)$. In this way, $R$ can be viewed as an $R$-module via $\mathcal{F}^{[m]}$. Let $I \subset R$ be an ideal (need not be homogeneous). We define $I^{[m]} := \langle f^{[m]} : f \in I \rangle $, and call it to be the $m$\textit{-th square power} of $I$.   { This notion first appears in \cite{nvv}.} If $I$ is a monomial ideal or $\K$ is the finite field of order $p$, then the $p^e$-th Frobenius power of $I$ is same as the $p^e$-th square power of $I$. Whenever we write $I^{[p^e]}$, we mean that $\cha(\K) =p$ is prime and $I^{[p^e]}$ is the $p^e$-th Frobenius power of $I$, otherwise the base field is of arbitrary characteristic  and $I^{[m]} $ is the $m$-th square power of $I$.    
	{In \cite{nvv}, Neves,  Vaz Pinto and Villarreal obtained the regularity of $\reg(R/I^{[m]})$ when $R/I$ is Cohen-Macaulay. We generalize their result for homogeneous ideals.}   
	We prove that $\reg(R/I^{[m]})$ is a linear function in $m$ by obtaining a precise formula for $\reg(R/I^{[m]})$ in terms of $\reg(R/I)$ (Corollary \ref{reg}). The proof relies on the relation between the graded Betti numbers of $R/I^{[m]}$ and $R/I$ (Corollary \ref{main-thm2}), and the relation between their extremal Betti numbers (Proposition \ref{extremal}).
	We also prove analogous statements for Frobenius powers of homogeneous ideals (Theorem \ref{frob-thm}, Proposition \ref{extremal-frobenius}).
	
	We also study the behavior of ${\pd}(R/I^{[m]})$ and ${\ass}(I^{[m]})$ for $m\geq 2$. The following well-known lemma relates projective dimension and the sets of associated primes of $R/I$ and $R/I^{[p^e]}$. 
	\begin{lem}[Peskine-Szpiro]\cite[Lemma 2.2]{hochster}\label{peskine}
		Let $I \subset R$ be an ideal. Then,  
		\begin{itemize}
			\item[(a)] ${\pd} (R/I^{[p^e]}) = {\pd} (R/I)$ for all $e\geq 1$.
			\item[(b)] $\ass(R/I^{[p^e]})=\ass(R/I)$ for all $e \geq 1$.
		\end{itemize}
	\end{lem}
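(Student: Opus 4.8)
The plan is to use Kunz's theorem: since $R=\K[X_1,\ldots,X_n]$ is regular, the Frobenius endomorphism $F\colon R\to R$ is flat, hence so is $F^e$ for every $e\ge 1$, and therefore the Frobenius base-change functor $F^e_R(-)=R^e\otimes_R-$ is exact. The first thing to record is the identification $F^e_R(R/I)\cong R/I^{[p^e]}$: applying $F^e_R$ to a presentation $R^s\xrightarrow{(f_1,\ldots,f_s)}R\to R/I\to 0$ replaces the matrix $(f_1,\ldots,f_s)$ by the matrix $(f_1^{p^e},\ldots,f_s^{p^e})$ of its $p^e$-th powers, so the cokernel becomes $R/(f_1^{p^e},\ldots,f_s^{p^e})=R/I^{[p^e]}$.

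For part (a), start from a minimal free resolution $F_\bullet\to R/I\to 0$. Since $F^e_R$ is exact and carries $R^{b}$ to $R^{b}$, the complex $F^e_R(F_\bullet)$ is a free resolution of $R/I^{[p^e]}$ of the same length; its differentials are obtained by raising each matrix entry to the $p^e$-th power, so every entry still lies in $\m=(X_1,\ldots,X_n)$ and the resolution is again minimal. Comparing lengths gives $\pd(R/I^{[p^e]})=\pd(R/I)$. (In the standard graded setting the same computation simultaneously records the rescaling $j\mapsto p^e j$ of internal degrees, which is the mechanism behind the later results on Betti numbers and regularity.)

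For part (b), first observe $\sqrt{I^{[p^e]}}=\sqrt{I}$, since on generators $\sqrt{(f_1^{p^e},\ldots,f_s^{p^e})}=\sqrt{(f_1,\ldots,f_s)}$; hence a prime $\p$ contains $I$ if and only if it contains $I^{[p^e]}$, and $(R/I)_\p=0 \iff (R/I^{[p^e]})_\p=0$ (so in particular the two ideals share their minimal primes). For $\p\supseteq I$, localize: $R_\p$ is regular local, so Frobenius is flat on it by Kunz, and $I^{[p^e]}R_\p=(IR_\p)^{[p^e]}$ because a Frobenius power is generated by the Frobenius powers of a generating set. Running the argument of part (a) with a minimal free resolution of $(R/I)_\p$ over $R_\p$ then gives $\pd_{R_\p}\big((R/I^{[p^e]})_\p\big)=\pd_{R_\p}\big((R/I)_\p\big)$, so by Auslander--Buchsbaum over $R_\p$ the two depths coincide. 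Thus $\depth_{R_\p}(R/I)_\p=0 \iff \depth_{R_\p}(R/I^{[p^e]})_\p=0$, i.e.\ $\p\in\ass(R/I)\iff\p\in\ass(R/I^{[p^e]})$.

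Everything here is formal once Kunz's flatness theorem is in hand, which is the only real input. The point requiring attention is that projective dimension by itself does not pin down $\ass$, so part (b) genuinely needs the localized form of the argument rather than a one-line appeal to part (a); a secondary, more pedestrian care-point is that localizing a minimal free resolution need not preserve minimality, which is why one takes a fresh minimal resolution over $R_\p$.
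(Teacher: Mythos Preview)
The paper does not supply its own proof of this lemma; it is quoted from \cite[Lemma 2.2]{hochster} (attributed to Peskine--Szpiro) and used as a black box. Your argument is correct and is the standard one: Kunz's theorem gives flatness of $F^e$ on the regular ring $R$ (and on each $R_\p$), so Frobenius base change carries a minimal free resolution of $R/I$ to a minimal free resolution of $R/I^{[p^e]}$ of the same length, and the localized version together with Auslander--Buchsbaum yields equality of depths at every prime, hence equality of associated primes. There is nothing in the paper to compare your approach against.
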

	We prove the analogue of statement(a) (Corollary ~\ref{cor-depth}) for projective dimension of square powers.
	The analogous statement for associated primes does not hold in general, see Example ~\ref{count ass}. For monomial ideals,  we prove $\ass(R/I^{[m]})=\ass(R/I)$ for all $m \geq 1$ (Theorem ~\ref{main-thm1}).  
	
	The study of regularity of    powers of homogeneous ideals has been a central problem in commutative algebra and algebraic geometry.     Cutkosky, Herzog and Trung \cite{CHT}, and independently  Kodiyalam \cite{vijay} proved that if $I$ is a homogeneous ideal of $R$, then  $\reg(R/I^s)$ is a linear function in $s$,     i.e., there exist non-negative integers $a$ and $b$ depending on $I$ such that $\reg(R/I^s)=as+b \text{ for all $s \gg 0$}.$
	Using square powers of  homogeneous ideal $I$, we prove that there exist $1\leq i \leq n$ such that $\beta_{i,i+\reg(R/I^s)}(R/I^s)$ is an extremal Betti number for $s \gg 0$ (Theorem \ref{extremal-power}).

	We then move on to study square powers of monomial ideals. First, we compute the primary decomposition of square powers of monomial ideals (Theorem \ref{main-thm1}).   Then, we study normally torsion-freeness of square powers of monomial ideals. We prove that for a monomial ideal $I$, $I^{[m]}$ is normally torsion-free if and only if $I$ is normally torsion-free (Theorem \ref{normally torsion free}). Gitler, Valencia and Villarreal \cite[Corollary 3.14]{gvv} classified   {all} normally torsion-free square-free monomial ideals. Moreover,  they gave an effective method using optimization to determine when a square-free monomial ideal is normally torsion-free. Along with their results Theorem \ref{normally torsion free} gives an effective criterion to determine whether $I^{[m]}$ is normally torsion-free. Brodmann \cite{brod} proved that the sequence $\lbrace \ass({R}/{I^s}) \rbrace _ {s\geq 1}$ of associated prime ideals is stationary for $s\gg0$. The least positive integer $s_0 $ such that $\ass({R}/{I^s})= \ass({R}/{I^{s_0}})$ for all $s\geq s_0 $  is called the \textit{index of stability} of $I$. We prove that the index of stability of monomial ideal $I$ and $I^{[m]}$ are the same (Theorem ~\ref{index}).
	
	Next, we study the depth of symbolic powers of  monomial ideals. Nguyen and Trung proved that ${\depth}({R}/{I^{(s)}})$ is  periodic for $s\gg0$, \cite[Proposition 3.1]{TrungInv}. Moreover, they  proved that for an extensive class of monomial ideals (including all square-free monomial ideals) ${\depth}({R}/{I^{(s)}})$ is always a convergent function. They proved:
	\begin{thm}\cite[Theorem 3.1]{TrungInv}
		Let $I$ be a monomial ideal in $R$ such that $I^{(s)}$ is integrally closed for $s\gg0$. Then,  ${\depth} ({R}/{I^{(s)}})$ is a convergent function with 
		\begin{displaymath}
			\lim_{s\to \infty} {\depth} ({R}/{I^{(s)}})= \dim (R) - \dim ( F_{\mathfrak{s}}(I)),
		\end{displaymath}
		which is also the minimum of $\depth({R}/{I^{(s)}})$ among all integrally closed symbolic powers $I^{(s)}$.
	\end{thm}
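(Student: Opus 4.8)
The plan is to reduce, through a Veronese subalgebra of the symbolic Rees algebra, to the ordinary power situation for a \emph{normal} monomial ideal, where Hochster's theorem on normal affine semigroup rings provides the Cohen-Macaulayness that forces the depth to stabilize at its largest possible value, namely $\dim R-\dim F_{\mathfrak{s}}(I)$.

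Since $I$ is a monomial ideal, its symbolic Rees algebra $\mathcal{R}_s(I)=\bigoplus_{s\ge 0}I^{(s)}t^{s}$ is Noetherian. First I would choose $d$ large enough that the $d$-th Veronese $\mathcal{R}_s(I)^{(d)}=\bigoplus_{k\ge 0}I^{(dk)}t^{dk}$ is generated in degree $1$ over $R$ and that $I^{(dk)}$ is integrally closed for every $k\ge 1$; setting $J=I^{(d)}$, one then has $\mathcal{R}_s(I)^{(d)}=\mathcal{R}(J)$, the ordinary Rees algebra of $J$, with all powers $J^{k}=I^{(dk)}$ integrally closed, i.e.\ $J$ normal. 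Hence $\depth(R/I^{(dk)})=\depth(R/J^{k})$ for all $k$, and, since a Veronese subalgebra preserves Krull dimension, $\ell(J):=\dim\bigl(\mathcal{R}(J)/\m\mathcal{R}(J)\bigr)=\dim\bigl(F_{\mathfrak{s}}(I)^{(d)}\bigr)=\dim F_{\mathfrak{s}}(I)$.

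Next comes the engine. As $J$ is a normal monomial ideal, $\mathcal{R}(J)$ is a normal affine semigroup ring, hence Cohen-Macaulay by Hochster's theorem; therefore the associated graded ring $\gr_J(R)=\mathcal{R}(J)/J\mathcal{R}(J)$ is Cohen-Macaulay, and the theorem of Eisenbud-Huneke yields $\depth(R/J^{k})=\dim R-\ell(J)$ for $k\gg 0$. With the previous paragraph this gives $\lim_{k}\depth(R/I^{(dk)})=\dim R-\dim F_{\mathfrak{s}}(I)$. To upgrade from the progression $\{dk\}$ to all $s$, I would combine the eventual periodicity of $s\mapsto\depth(R/I^{(s)})$ from \cite[Proposition~3.1]{TrungInv} with the same Cohen-Macaulay input applied to each of the finitely many $\mathcal{R}(J)$-modules $M_{r}=\bigoplus_{k\ge 0}I^{(dk+r)}t^{dk+r}$ with $0\le r<d$: the graded local cohomology of $M_r$ over the Cohen-Macaulay ring $\gr_J(R)$ vanishes in cohomological degrees below $\dim R-\ell(J)$ and is nonzero in degree $\dim R-\ell(J)$ for $k\gg0$, pinning $\depth(R/I^{(dk+r)})$ to $\dim R-\dim F_{\mathfrak{s}}(I)$ for $k\gg 0$. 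As an integer-valued function with a limit is eventually constant, this proves convergence and identifies the limit. For the minimality assertion I would pass to the normalized symbolic Rees algebra $\mathcal{A}=\bigoplus_{s\ge 0}\overline{I^{(s)}}\,t^{s}$, again a normal affine semigroup ring and hence Cohen-Macaulay; the standard graded-local-cohomology estimate for Cohen-Macaulay filtrations then gives $\depth\bigl(R/\overline{I^{(s)}}\bigr)\ge\dim R-\dim F_{\mathfrak{s}}(I)$ for \emph{every} $s$, which equals $\depth(R/I^{(s)})$ exactly when $I^{(s)}$ is integrally closed.

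The hard part will be the ``upper'' direction together with the Veronese bookkeeping. Showing $\depth(R/I^{(s)})$ does not stay strictly above $\dim R-\dim F_{\mathfrak{s}}(I)$ for $s\gg 0$ amounts to the Burch type non-vanishing $H^{\dim R-\dim F_{\mathfrak{s}}(I)}_{\m}(R/I^{(s)})\ne 0$ for $s\gg 0$, and obtaining this uniformly over all residues $s\bmod d$ is exactly where the periodicity result \cite[Proposition~3.1]{TrungInv} and the Cohen-Macaulayness supplied by Hochster's theorem are both indispensable. Transferring Cohen-Macaulayness and analytic-spread data correctly across the Veronese, and proving the lower-bound lemma for a possibly non-standard-graded Cohen-Macaulay filtration such as $\mathcal{A}$, also need care.
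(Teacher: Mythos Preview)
The paper does not prove this statement at all: it is quoted verbatim from Nguyen and Trung \cite[Theorem~3.1]{TrungInv} (actually \cite[Theorem~3.3]{TrungInv} as referenced later in Section~3) and used as a black box in the proof of Theorem~\ref{main-thm3}. There is therefore no ``paper's own proof'' against which to compare your proposal.

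For what it is worth, your sketch does follow the broad architecture of the original Nguyen--Trung argument: pass to a suitable Veronese of the symbolic Rees algebra so that the relevant filtration becomes the ordinary powers of a normal monomial ideal $J$, invoke Hochster's theorem to get Cohen--Macaulayness of $\mathcal{R}(J)$ and $\gr_J(R)$, and then read off the asymptotic depth via analytic spread. The delicate step you flag---extending the conclusion from the arithmetic progression $\{dk\}$ to all $s$---is indeed where the work lies, and your idea of treating the modules $M_r=\bigoplus_k I^{(dk+r)}t^{dk+r}$ over the Cohen--Macaulay base is the right one. But none of this appears in the present paper, which simply imports the result.
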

	In  Lemma \ref{integral}, we prove that $(I^{[m]})^{(s)}$ is not integrally closed for $m \geq 2, s\geq 2$. If $I$ is a monomial  ideal such that $I^{(s)}$ is integrally closed for $s \gg 0$, then for all $m \geq 2$,  the sequence $\{\depth(R/(I^{[m]})^{(s)})\}_{s\geq1}$ is a convergent sequence  (Theorem ~\ref{main-thm3}) and $$ \lim_{s \rightarrow \infty} \depth(R/(I^{[m]})^{(s)})=\dim (R)- \dim(F_{\mathfrak{s}}(I^{[m]})).$$

	\section{Square Powers of an Ideal}
	Throughout the article, we fix $\k$ to be any field, and $R=\K[X_1,\ldots,X_n]$ to be a polynomial ring. We assume that $0 \in \mathbb{N}$. For $\alpha =(\alpha_1,\ldots,\alpha_n) \in \mathbb{N}^n$, set $X^{\alpha}=\prod_{i=1}^n X_i^{\alpha_i}.$ Set $\widetilde{R}=\K[X_1^m,\ldots,X_n^m]$ for any positive integer $m$.
	\begin{defn}\label{main defn}
		Let $f \in R$ be an element. We set $f^{[m]} =f(X_1^m,\ldots,X_n^m)$.    Let $I$ be an ideal in $R$. Then,  for any positive integer $m$, we define $I^{[m]}:=\langle f^{[m]} \; : \; f \in I \rangle$. We call $I^{[m]}$ to be the $m$-th square power of $I$.
	\end{defn}
	The following remark forms the basis for the proofs of this article.
	\begin{rem}\label{rmk}
		The square power operation preserves products and sums, i.e.,
		$(f + g)^{[m]} = (f)^{[m]} + (g)^{[m]}$ and
		$(fg)^{[m]} = (f)^{[m]} (g)^{[m]}$ and hence defines a ring isomorphism from $R$ to $\widetilde{R}$. Thus, $I^{[m]} $ is the extension of the ideal $( \mathcal{F}^{[m]}(I) )$.
	\end{rem}
	
	\begin{rem}\label{frob power}
		If $\k$ is the finite field of order $p$, and $I$ is a homogeneous ideal in $R$, then ${p^e}$-th square power of $I$ coincides with the $p^e$-th Frobenius power of $I$. 
	\end{rem}
	
	We first prove the following result that is needed for the rest of the article.
	\begin{lem}
		Let $I$ be an ideal of $R$ and $f \in R$. Then, $f \in I$ if and only if $f^{[m]} \in I^{[m]}$.
	\end{lem}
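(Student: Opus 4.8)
The plan is to exploit the ring isomorphism $\mathcal{F}^{[m]} \colon R \to \widetilde{R}$ recorded in Remark~\ref{rmk}, together with the fact that $R$ is a free $\widetilde{R}$-module with an explicit monomial basis. The forward implication is immediate: if $f \in I$, then $f^{[m]} = \mathcal{F}^{[m]}(f)$ is by definition one of the generators of $I^{[m]}$, so $f^{[m]} \in I^{[m]}$. The content is in the converse, and the key structural observation is that $R$ decomposes as a direct sum of free rank-one $\widetilde{R}$-modules indexed by the ``residues'' $\gamma \in \{0,1,\ldots,m-1\}^n$, namely $R = \bigoplus_{\gamma} X^{\gamma}\,\widetilde{R}$, since every monomial $X^{\alpha}$ factors uniquely as $X^{\gamma} \cdot (X^{\beta})^{[m]}$ with $\alpha = \gamma + m\beta$ and $0 \le \gamma_i \le m-1$.

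First I would fix notation: write $\pi_0 \colon R \to \widetilde{R}$ for the $\widetilde{R}$-linear projection onto the ``$\gamma = 0$'' summand $\widetilde{R}$ in the above decomposition, i.e. $\pi_0$ keeps exactly those monomials of $f$ all of whose exponents are divisible by $m$ (and discards the rest). Next I would observe that $I^{[m]} = \langle \mathcal{F}^{[m]}(g) : g \in I\rangle \subseteq \widetilde{R}$, so every element $h \in I^{[m]}$ can be written as $h = \sum_j r_j \mathcal{F}^{[m]}(g_j)$ with $r_j \in R$ and $g_j \in I$; splitting each $r_j$ into its residue components and using that $X^{\gamma} \widetilde{R} \cap X^{\gamma'}\widetilde{R} = 0$ for $\gamma \ne \gamma'$, one sees that the component of $h$ lying in $\widetilde{R}$ (the $\gamma = 0$ part) is $\sum_j \pi_0(r_j)\,\mathcal{F}^{[m]}(g_j)$, which lies in $\mathcal{F}^{[m]}(I)$ because $\pi_0(r_j) \in \widetilde{R} = \mathcal{F}^{[m]}(R)$ and $\mathcal{F}^{[m]}$ is a ring homomorphism with $\mathcal{F}^{[m]}(g_j) \in \mathcal{F}^{[m]}(I)$. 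In other words, $I^{[m]} \cap \widetilde{R} = I^{[m]}$ equals $\mathcal{F}^{[m]}(I)$ as an ideal of $\widetilde{R}$; more precisely the extension of $(\mathcal{F}^{[m]}(I))$ to $\widetilde{R}$ is just $\mathcal{F}^{[m]}(I)$ itself since $\mathcal{F}^{[m]}$ is an isomorphism onto $\widetilde{R}$.

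Now suppose $f^{[m]} \in I^{[m]}$. Since $f^{[m]} = \mathcal{F}^{[m]}(f) \in \widetilde{R}$ and, by the previous paragraph, $I^{[m]}$ as a subset of $\widetilde{R}$ coincides with $\mathcal{F}^{[m]}(I)$, we get $\mathcal{F}^{[m]}(f) \in \mathcal{F}^{[m]}(I)$. Applying the inverse isomorphism $(\mathcal{F}^{[m]})^{-1} \colon \widetilde{R} \to R$ yields $f \in I$, as desired. The one point that requires care — the step I expect to be the main obstacle — is the claim that $I^{[m]}$, a priori defined as an ideal of $R$ generated by elements of $\widetilde{R}$, satisfies $I^{[m]} \subseteq \widetilde{R}$ only after intersecting, so that $I^{[m]} \cap \widetilde{R} = \mathcal{F}^{[m]}(I)$; this is exactly where the freeness of $R$ over $\widetilde{R}$ and the directness of the residue decomposition are used, and one must be careful that coefficients from $R \setminus \widetilde{R}$ cannot conspire to produce new elements of $\widetilde{R}$ beyond $\mathcal{F}^{[m]}(I)$. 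Once that is nailed down via the projection $\pi_0$, the rest is a formal consequence of $\mathcal{F}^{[m]}$ being a ring isomorphism onto $\widetilde{R}$.
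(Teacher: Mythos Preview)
Your approach is essentially the same as the paper's: both arguments hinge on the identity $I^{[m]}\cap\widetilde{R}=\mathcal{F}^{[m]}(I)$ (which you justify carefully via the projection $\pi_0$ coming from the free decomposition $R=\bigoplus_{\gamma}X^{\gamma}\widetilde{R}$, and which the paper asserts more tersely as ``hence $f^{[m]}=\sum h_i^{[m]}f_i^{[m]}$''), followed by applying the inverse of the isomorphism $\mathcal{F}^{[m]}\colon R\to\widetilde{R}$. One small expositional slip: the line ``$I^{[m]}=\langle\mathcal{F}^{[m]}(g):g\in I\rangle\subseteq\widetilde{R}$'' and later ``$I^{[m]}\cap\widetilde{R}=I^{[m]}$'' are not literally true since $I^{[m]}$ is an ideal of $R$, but you correctly identify and resolve this in your final paragraph, so the mathematics stands.
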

	\begin{proof}
		Let $I$ be generated by polynomials $f_1,\ldots,f_k$.  We claim that $I^{[m]} = \langle f_1^{[m]},\ldots, f_k^{[m]} \rangle$. Clearly, $\langle f_1^{[m]},\ldots, f_k^{[m]} \rangle \subset I^{[m]}$. Now, let $g \in I^{[m]}$ be any element. Then, $g = \sum_{i=1}^r h_ig_i^{[m]}$ for some $h_i \in R$ and $g_i \in I$. Now, for each $i$,  $g_i =\sum_{j=1}^{k} h'_{ij} f_j$ for some $h'_{ij} \in R$. Therefore, $g_i^{[m]} =\sum_{j=1}^{k} {h'}_{ij}^{[m]} f_j^{[m]}$. Thus, $g = \sum_{i=1}^r \sum_{j=1}^k h_i{h'}_{ij}^{[m]} f_j^{[m]} \in \langle f_1^{[m]},\ldots, f_k^{[m]} \rangle$. Hence, $I^{[m]} = \langle f_1^{[m]},\ldots, f_k^{[m]} \rangle$.
		
		Let $f \in R$ be such that $f^{[m]} \in I^{[m]}$. Then, $f^{[m]}  \in I^{[m]} \cap \widetilde{R}$, and hence, $f^{[m]}= \sum_{i=1}^k h_i^{[m]}f_i^{[m]}=(\sum_{i=1}^k h_if_i)^{[m]}$ for some $h_i \in R$. Therefore, $f= \sum_{i=1}^k h_if_i \in I$. Hence, the assertion follows.
	\end{proof}
	
	{In \cite{nvv}, Neves, Vaz Pinto and Villarreal proved that if $I$ is a complete intersection graded lattice ideal, then $I^{[m]}$ is also complete intersection ideal, see \cite[Section 4]{nvv}. We generalize their result for graded ideals.}
	
	\begin{prop}\label{m-regular}
		If $f_1,\ldots,f_r \in R$ is an $R$-regular sequence, then, for any $m \geq 2$, $f_1^{[m]},\ldots,f_r^{[m]}$ is an $R$-regular sequence.
	\end{prop}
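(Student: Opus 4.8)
The plan is to exploit the ring isomorphism $\mathcal{F}^{[m]} \colon R \to \widetilde{R}$ from Remark \ref{rmk}, which sends $f_i \mapsto f_i^{[m]}$, so that $f_1^{[m]},\ldots,f_r^{[m]}$ is a $\widetilde{R}$-regular sequence whenever $f_1,\ldots,f_r$ is an $R$-regular sequence. The whole content of the proposition is therefore the \emph{descent} from $\widetilde{R}$-regularity to $R$-regularity, using that $R$ is a free (in fact, faithfully flat) $\widetilde{R}$-module: indeed $R = \bigoplus_{\alpha} X^{\alpha}\widetilde{R}$, where $\alpha$ ranges over $\{0,1,\ldots,m-1\}^n$, since every monomial $X^{\beta}$ factors uniquely as $X^{\alpha}\cdot (X^{\gamma})^{[m]}$ with $\alpha_i = \beta_i \bmod m$. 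Faithful flatness of $R$ over $\widetilde{R}$ immediately upgrades a $\widetilde{R}$-regular sequence inside $\widetilde{R}$ to an $R$-regular sequence in $R$: flatness preserves injectivity of each multiplication map $\cdot f_{j+1}^{[m]}$ on $R/(f_1^{[m]},\ldots,f_j^{[m]})R$, and faithfulness guarantees $R/(f_1^{[m]},\ldots,f_r^{[m]})R \neq 0$.

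First I would record the free-module decomposition $R = \bigoplus_{\alpha \in \{0,\ldots,m-1\}^n} X^{\alpha}\widetilde{R}$ and conclude that $R$ is faithfully flat over $\widetilde{R}$. Next I would invoke the standard fact (e.g. via the Koszul complex, or by induction on $r$ using flatness) that base change along a faithfully flat ring map sends regular sequences to regular sequences; applied to $\widetilde{R} \hookrightarrow R$ and the sequence $f_1^{[m]},\ldots,f_r^{[m]} \in \widetilde{R}$, this gives the claim. Alternatively, and perhaps more in the spirit of the paper's elementary approach, one can argue directly: if $g \in R$ satisfies $g\, f_{j+1}^{[m]} \in (f_1^{[m]},\ldots,f_j^{[m]})R$, expand $g = \sum_{\alpha} X^{\alpha} g_{\alpha}^{[m]}$ with $g_{\alpha} \in R$ (using $X^{\alpha}\widetilde{R} \cong \widetilde{R}$), compare the $X^{\alpha}$-components on both sides — each component lies in $(f_1^{[m]},\ldots,f_j^{[m]})\widetilde{R}$ — and then pull back through the isomorphism $\mathcal{F}^{[m]}$ to deduce $g_{\alpha} f_{j+1} \in (f_1,\ldots,f_j)$ in $R$, hence $g_{\alpha} \in (f_1,\ldots,f_j)$, hence $g \in (f_1^{[m]},\ldots,f_j^{[m]})R$.

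The only point requiring care — and the main (mild) obstacle — is that the regular sequence lives in $R$, not in $\widetilde{R}$: we are given that $f_1^{[m]},\ldots,f_r^{[m]}$ is $\widetilde{R}$-regular, and we must handle multiplication by these elements on quotients of $R$ rather than of $\widetilde{R}$. The decomposition $R = \bigoplus_{\alpha} X^{\alpha}\widetilde{R}$ is precisely what lets us do the bookkeeping, since each $X^{\alpha}$ is a nonzerodivisor and the summands are permuted trivially (in fact fixed) by multiplication by elements of $\widetilde{R}$; thus the quotient $R/(f_1^{[m]},\ldots,f_j^{[m]})R$ decomposes as $\bigoplus_{\alpha} X^{\alpha}\bigl(\widetilde{R}/(f_1^{[m]},\ldots,f_j^{[m]})\widetilde{R}\bigr)$, and $f_{j+1}^{[m]}$ acts as a nonzerodivisor on each summand by the given hypothesis transported through $\mathcal{F}^{[m]}$. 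Finally, $R/(f_1^{[m]},\ldots,f_r^{[m]})R \neq 0$ because its $\alpha = 0$ summand is $\widetilde{R}/(f_1^{[m]},\ldots,f_r^{[m]})\widetilde{R} \cong R/(f_1,\ldots,f_r) \neq 0$.
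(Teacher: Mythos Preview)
Your proposal is correct and follows essentially the same route as the paper: both arguments transport the regular sequence through the isomorphism $R\cong\widetilde{R}$ to obtain $\widetilde{R}$-regularity of $f_1^{[m]},\ldots,f_r^{[m]}$, then use the free decomposition $R=\bigoplus_{\alpha\in\{0,\ldots,m-1\}^n} X^{\alpha}\widetilde{R}$ to descend to $R$-regularity. The paper carries out the descent step via Koszul homology (citing \cite[Proposition~1.6.7 and Corollary~1.6.14]{Her1}), which is exactly one of the options you list; your faithful-flatness and direct component-wise alternatives are equally valid packagings of the same content.
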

	\begin{proof} Let $m \geq 2$.  Observe that $\{X^a : \; a \in \mathbb{N}^n, \text{ and } a_i <m\}$ is a free basis of $\widetilde{R}$-module $R$.   By \cite[Proposition 1.6.7]{Her1}, the Koszul homology of $f_1^{[m]},\ldots,f_r^{[m]}$ is   $H_{\bullet}(f_1^{[m]},\ldots,f_r^{[m]};R)=H_{\bullet}(f_1^{[m]},\ldots,f_r^{[m]};\widetilde{R}) \otimes_{\widetilde{R}}R$. We define $\varPhi : R \rightarrow \widetilde{R}$ as $\varPhi(X_i)= X_i^m.$ Observe that $\varPhi$ is a ring isomorphism.  Thus, $f_1^{[m]},\ldots,f_r^{[m]}$ is an $\widetilde{R}$-regular sequence. Now, by \cite[Corollary 1.6.14]{Her1},  $f_1^{[m]},\ldots,f_r^{[m]}$ is an $R$-regular sequence.             
	\end{proof}

	Let $I\subset R $ be a homogeneous ideal. We aim to study algebraic invariants associated with the minimal free resolution of $R /I^{[m]}$. First, we recall some facts about the minimal free resolution.
	
	Let	\[(\mathfrak{F}_{\bullet}, \phi_{\bullet}) \; \; : \; \; 0 \longrightarrow \mathfrak{F}_{t} \xrightarrow{\phi_t} \mathfrak{F}_{t-1} \xrightarrow{\phi_{t-1}} \cdots \xrightarrow{\phi_2} \mathfrak{F}_1 \xrightarrow{\phi_1} \mathfrak{F}_0 \longrightarrow R /I \longrightarrow 0 \] be the minimal graded free resolution of $R /I$, 
	where $\mathfrak{F}_i=\bigoplus_kR (-k)^{\beta_{i,k}(R /I)}$ for $i\geq 1$, $\mathfrak{F}_0=R$ and the matrix $\phi_1$ is given by minimal homoogeneous set of generators. Here, $t$ is  the \textit{projective dimension} of $R /I$, and it is denoted by $\pd(R /I)$. Also, $R (-k)$ is the free $R $-module of rank $1$ generated in degree $k$. 
	The number $\beta_{i,k}(R /I)$ is called the $(i,k)$-th \textit{graded Betti number} of $R /I$, and this is uniquely determined by $I$, i.e., $\beta_{i,k}(R /I) = \dim_\K \Tor_i^R (\K,R /I)_{k},  \text{ for } i \geq 0, \text{ and } k \in \mathbb{Z}$.      
	For more details on the graded minimal free resolution, we refer the readers to \cite{Her1} and \cite{peeva}.
	
	{In \cite{nvv}, Neves, Vaz Pinto and Villarreal computed the graded minimal free resolution of ${R} /I^{[m]}$. They proved: 		
		\begin{thm}\cite[Lemma 3.7 (a)]{nvv}(or \cite[Lemma 3.1]{GHM})\label{main-thm-free}
			Let $I\subset R $ be a homogeneous  ideal and $m \geq 2$. If $(\mathfrak{F}_{\bullet}, \phi_{\bullet})$ is the graded minimal free  resolution of $R /I$, then $(\mathfrak{F}_{\bullet}^{[m]}, \phi_{\bullet}^{[m]})$ is the graded minimal free  resolution of $R /I^{[m]}$, where for all $i \geq 1$, $$\mathfrak{F}_i^{[m]}=\bigoplus_kR (-mk)^{\beta_{i,k}(R /I)},$$ and the matrix $\phi_{i}^{[m]}$ is given by $$(\phi_i^{[m]})_{k,l}= ((\phi_i)_{k,l})^{[m]}.$$ 
	\end{thm}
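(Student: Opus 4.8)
The plan is to realize $(\mathfrak{F}_\bullet^{[m]},\phi_\bullet^{[m]})$ as the result of a flat base change applied to the resolution of $R/I$. Keep in mind the two structures carried by $\widetilde{R}=\K[X_1^m,\ldots,X_n^m]$: first, the ring isomorphism $\Phi\colon R\to\widetilde{R}$, $\Phi(X_i)=X_i^m$, which multiplies degrees by $m$ (a form of degree $d$ in $R$ goes to a form of degree $md$); and second, the grading $\widetilde{R}$ inherits as a graded subring of $R$, for which $X_i^m$ has degree $m$. As observed in the proof of Proposition \ref{m-regular}, the monomials $\{X^a : a\in\mathbb{N}^n,\ a_i<m\}$ form a homogeneous free basis of $R$ over $\widetilde{R}$, so $R$ is a free (in particular flat) graded $\widetilde{R}$-algebra, and by Remark \ref{rmk} one has $I^{[m]}=\widetilde{I}R$, where $\widetilde{I}:=\bigl(\mathcal{F}^{[m]}(I)\bigr)$ is an ideal of $\widetilde{R}$.

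First I would transport the resolution through $\Phi$. Applying the ring isomorphism $\Phi$ to the entries of each $\phi_i$ turns $(\mathfrak{F}_\bullet,\phi_\bullet)$ into a graded free resolution of $\widetilde{R}/\widetilde{I}$ over $\widetilde{R}$ whose $i$-th term is $\bigoplus_k\widetilde{R}(-mk)^{\beta_{i,k}(R/I)}$ (the shift becomes $mk$ precisely because $\Phi$ scales degrees by $m$) and whose differential has $(k,l)$-entry $((\phi_i)_{k,l})^{[m]}$.

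Next I would base change along $\widetilde{R}\hookrightarrow R$. Since $R$ is flat over $\widetilde{R}$, applying $-\otimes_{\widetilde{R}}R$ preserves exactness, and the zeroth homology becomes $(\widetilde{R}/\widetilde{I})\otimes_{\widetilde{R}}R=R/\widetilde{I}R=R/I^{[m]}$; using $\widetilde{R}(-mk)\otimes_{\widetilde{R}}R\cong R(-mk)$ as graded $R$-modules, the $i$-th term is $\bigoplus_k R(-mk)^{\beta_{i,k}(R/I)}=\mathfrak{F}_i^{[m]}$ and the differentials are still given by the matrices with entries $((\phi_i)_{k,l})^{[m]}$, now read inside $R$. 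So $(\mathfrak{F}_\bullet^{[m]},\phi_\bullet^{[m]})$ is a graded free resolution of $R/I^{[m]}$. Finally, for minimality: each entry $(\phi_i)_{k,l}$ lies in $\m=(X_1,\ldots,X_n)$, hence has zero constant term, hence so does $((\phi_i)_{k,l})^{[m]}=(\phi_i)_{k,l}(X_1^m,\ldots,X_n^m)$; thus every entry of every $\phi_i^{[m]}$ lies in $\m$ and the resolution is minimal.

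The one place that needs care is the grading bookkeeping — getting the shift to be $mk$ rather than $k$ — which is why it is worth fixing, once and for all, the grading on $\widetilde{R}$ in which $X_i^m$ has degree $m$, and checking that $\Phi$ multiplies degrees by $m$ and that $\widetilde{R}(-mk)\otimes_{\widetilde{R}}R\cong R(-mk)$. Everything else is formal: exactness is flat base change (essentially \cite[Proposition 1.6.7]{Her1}, as already used for Proposition \ref{m-regular}), and the compatibility of the differentials with $(-)^{[m]}$ is just the statement that $\mathcal{F}^{[m]}$ is a ring homomorphism, i.e. Remark \ref{rmk}.
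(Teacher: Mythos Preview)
Your argument is correct. The paper does not supply its own proof of this statement---it is quoted from \cite{nvv} and \cite{GHM}---so there is no in-paper proof to compare against directly; your flat-base-change approach (transport the resolution along the isomorphism $\Phi\colon R\to\widetilde{R}$, then tensor up along the free graded extension $\widetilde{R}\hookrightarrow R$, and observe that entries in $\m$ stay in $\m$) is the standard one and is essentially what those references do.

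For contrast, the paper's proof of the Frobenius analogue, Theorem~\ref{frob-thm}, takes a different route via the Buchsbaum--Eisenbud acyclicity criterion \cite[Theorem~1.4.13]{Her1}: one checks $\text{grade}\,I_{r_i}(\phi_i^{[p^e]})\geq i$ using that $I_{r_i}(\phi_i^{[p^e]})=(I_{r_i}(\phi_i))^{[p^e]}$ has the same radical (hence the same grade) as $I_{r_i}(\phi_i)$. That argument would transfer verbatim to $(-)^{[m]}$ and give an alternative proof here. Your method is more transparent for square powers because the freeness of $R$ over $\widetilde{R}$ is immediate from the monomial basis $\{X^a:a_i<m\}$, so exactness comes for free; the Buchsbaum--Eisenbud route, on the other hand, avoids any reliance on $R$ being flat over a subring, which is why the paper prefers it in the Frobenius setting where that flatness can fail for imperfect $\K$.
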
}
	
	As an immediate consequence, we compute graded Betti numbers of $R /I^{[m]}$ in terms of graded Betti numbers of $R /I$. 
	\begin{cor}\label{main-thm2}
		Let $I\subset R $ be a homogeneous  ideal and $m \geq 2$. Then,  $\beta_{i,j}(R /I^{[m]})$ is a non-zero graded Betti number  only if $j=mk$ for some $k$, and $\beta_{i,k}(R /I)$ is a non-zero graded Betti number. Moreover, $\beta_{i,mk}(R /I^{[m]})=\beta_{i,k}(R /I)$, for every $k,$ and $i \geq 0$.
	\end{cor}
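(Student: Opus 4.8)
The plan is to obtain the corollary as a direct reading of the minimal free resolution of $R/I^{[m]}$ furnished by Theorem \ref{main-thm-free}. The one general fact I would invoke is the standard translation between minimal free resolutions and Betti numbers: if $(\mathfrak{G}_\bullet,\psi_\bullet)$ is the graded minimal free resolution of a finitely generated graded $R$-module $M$ with $\mathfrak{G}_i=\bigoplus_j R(-j)^{b_{i,j}}$, then minimality of $\psi_\bullet$ means every entry of each $\psi_i$ lies in $\m$, so $\psi_\bullet\otimes_R\K=0$ and hence $\Tor_i^R(\K,M)\cong\mathfrak{G}_i\otimes_R\K$ as graded $\K$-vector spaces; comparing graded components gives $\beta_{i,j}(M)=b_{i,j}$.

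First I would apply this identity to $M=R/I^{[m]}$ using the resolution $(\mathfrak{F}_\bullet^{[m]},\phi_\bullet^{[m]})$ of Theorem \ref{main-thm-free}. For $i\geq1$ the free module in homological degree $i$ is $\mathfrak{F}_i^{[m]}=\bigoplus_k R(-mk)^{\beta_{i,k}(R/I)}$; matching this against $\bigoplus_j R(-j)^{\beta_{i,j}(R/I^{[m]})}$ summand by summand yields at once that $\beta_{i,j}(R/I^{[m]})=0$ unless $j=mk$ for some $k\in\mathbb{Z}$, and that $\beta_{i,mk}(R/I^{[m]})=\beta_{i,k}(R/I)$ for every such $k$. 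In particular $\beta_{i,mk}(R/I^{[m]})\neq0$ exactly when $\beta_{i,k}(R/I)\neq0$, which is the asserted equivalence.

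The only remaining case is $i=0$: since $R/I$ and $R/I^{[m]}$ are cyclic, both resolutions begin with $\mathfrak{F}_0=\mathfrak{F}_0^{[m]}=R$, so $\beta_{0,0}(R/I)=\beta_{0,0}(R/I^{[m]})=1$ and all other $\beta_{0,j}$ vanish; as $0=m\cdot0$ this agrees with the stated formula. I do not expect any genuine obstacle here — all the content is in Theorem \ref{main-thm-free}, and the corollary is merely its restatement in terms of graded Betti numbers; the only point requiring a word of care is the passage from ``ranks of the free modules in a minimal resolution'' to ``Betti numbers'', which is precisely the minimality argument recalled above.
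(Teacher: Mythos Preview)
Your argument is correct and is exactly the paper's approach: the paper's proof is the single line ``The assertion follows from Theorem \ref{main-thm-free},'' and you have simply spelled out the standard identification of graded Betti numbers with the twists in a minimal free resolution. There is no gap and no meaningful difference in method.
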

	\begin{proof}
		The assertion follows from Theorem \ref{main-thm-free}.
	\end{proof}
	We can conclude the analogue of statement(a) of Lemma \ref{peskine} for projective dimension of square powers as
	an immediate consequence of Theorem \ref{main-thm-free}.
	\begin{cor}\label{cor-depth}
		Let $I\subset R$ be a homogeneous ideal and $m \geq 2$. Then,  $\pd(R/I)=\pd(R/I^{[m]})$, for all $m \geq 1$. 
	\end{cor}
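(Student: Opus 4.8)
The plan is to read the equality directly off the description of the graded minimal free resolution of $R/I^{[m]}$ supplied by Theorem \ref{main-thm-free}. Recall that, for a homogeneous ideal, $\pd(R/I)$ equals the length $t$ of the graded minimal free resolution $(\mathfrak{F}_{\bullet},\phi_{\bullet})$ of $R/I$. For $m\geq 2$, Theorem \ref{main-thm-free} identifies the graded minimal free resolution of $R/I^{[m]}$ with $(\mathfrak{F}_{\bullet}^{[m]},\phi_{\bullet}^{[m]})$, where $\mathfrak{F}_i^{[m]}=\bigoplus_k R(-mk)^{\beta_{i,k}(R/I)}$. In particular $\mathfrak{F}_i^{[m]}$ has the same rank $\sum_k \beta_{i,k}(R/I)$ as $\mathfrak{F}_i$, so $\mathfrak{F}_i^{[m]}=0$ if and only if $\mathfrak{F}_i=0$; hence the two minimal resolutions have the same length $t$, and therefore $\pd(R/I^{[m]})=t=\pd(R/I)$. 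The case $m=1$ is immediate since $I^{[1]}=I$.

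Alternatively one can phrase the same argument in terms of Betti numbers via Corollary \ref{main-thm2}: since $\beta_{i,j}(R/I^{[m]})$ can be nonzero only when $j=mk$ with $\beta_{i,k}(R/I)\neq 0$, and $\beta_{i,mk}(R/I^{[m]})=\beta_{i,k}(R/I)$, we obtain $\sum_j \beta_{i,j}(R/I^{[m]})=\sum_k \beta_{i,k}(R/I)$ for every $i$, so exactly the same set of homological degrees $i$ carries a nonzero Betti number for $R/I^{[m]}$ and for $R/I$. Taking the largest such $i$ gives $\pd(R/I^{[m]})=\pd(R/I)$. There is essentially no obstacle to overcome here: the entire content lies in Theorem \ref{main-thm-free}, and the only point worth spelling out is that the term-by-term substitution $R(-k)\mapsto R(-mk)$ alters neither which homological degrees appear in the resolution nor the ranks of the free modules occupying them.
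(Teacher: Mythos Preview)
Your proposal is correct and matches the paper's approach exactly: the paper simply records this corollary as an immediate consequence of Theorem \ref{main-thm-free}, and you have spelled out precisely why---the minimal free resolution of $R/I^{[m]}$ furnished by that theorem has the same length as the minimal free resolution of $R/I$. Your alternative phrasing via Corollary \ref{main-thm2} is equally valid and amounts to the same observation.
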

	
	{The following result is an  immediate consequence of Corollary \ref{cor-depth}.
		\begin{cor}\label{cor-depth1}
			Let $I\subset R$ be a monomial ideal and $m \geq 2$. If $R/I$ is Cohen-Macaulay (Gorenstein), then $R/I^{[m]}$ is Cohen-Macaulay (Gorenstein).
		\end{cor}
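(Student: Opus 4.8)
The plan is to deduce the statement from Corollary~\ref{cor-depth} together with the Auslander-Buchsbaum formula and the easy fact that $\dim(R/I^{[m]}) = \dim(R/I)$; the Gorenstein part will use Corollary~\ref{main-thm2} in addition. First I would apply the Auslander-Buchsbaum equality $\pd_R(M) + \depth(M) = n$, valid for every finitely generated graded $R$-module $M$, to $M = R/I$ and to $M = R/I^{[m]}$; combined with Corollary~\ref{cor-depth} this yields $\depth(R/I^{[m]}) = n - \pd(R/I^{[m]}) = n - \pd(R/I) = \depth(R/I)$.

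Next I would compute the Krull dimension of $R/I^{[m]}$. By Remark~\ref{rmk}, $I^{[m]}$ is the extension to $R$ of the ideal $\widetilde{I} := \mathcal{F}^{[m]}(I)\widetilde{R}$ of $\widetilde{R} = \K[X_1^m,\ldots,X_n^m]$, and the ring isomorphism $\varPhi \colon R \to \widetilde{R}$, $X_i \mapsto X_i^m$, from the proof of Proposition~\ref{m-regular} carries $I$ onto $\widetilde{I}$, so that $R/I \cong \widetilde{R}/\widetilde{I}$ as rings and in particular $\dim(\widetilde{R}/\widetilde{I}) = \dim(R/I)$. Since $R$ is a finitely generated free $\widetilde{R}$-module with basis $\{X^a : 0 \le a_i < m\}$ (as recorded in the proof of Proposition~\ref{m-regular}), the ring $R/I^{[m]} = R \otimes_{\widetilde{R}} \widetilde{R}/\widetilde{I}$ is a module-finite, hence integral, extension of $\widetilde{R}/\widetilde{I}$; therefore $\dim(R/I^{[m]}) = \dim(\widetilde{R}/\widetilde{I}) = \dim(R/I)$.

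Combining the two computations, if $R/I$ is Cohen-Macaulay then $\depth(R/I^{[m]}) = \depth(R/I) = \dim(R/I) = \dim(R/I^{[m]})$, so $R/I^{[m]}$ is Cohen-Macaulay. If in addition $R/I$ is Gorenstein, I would use that a graded Cohen-Macaulay quotient $R/J$ of $R$ is Gorenstein if and only if the last free module in its minimal graded free resolution has rank one, i.e., $\sum_j \beta_{\pd(R/J),j}(R/J) = 1$. Setting $t = \pd(R/I) = \pd(R/I^{[m]})$ by Corollary~\ref{cor-depth}, Corollary~\ref{main-thm2} gives $\sum_j \beta_{t,j}(R/I^{[m]}) = \sum_k \beta_{t,k}(R/I) = 1$, so $R/I^{[m]}$ is Cohen-Macaulay of type one and hence Gorenstein.

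The only step requiring any care is the dimension equality: one must recognise $I^{[m]}$ as an extended ideal, transport dimension across the isomorphism $\varPhi$, and invoke that a module-finite ring extension preserves Krull dimension. Every other step is an immediate application of Corollaries~\ref{cor-depth} and~\ref{main-thm2}, so there is essentially no obstacle, consistent with the statement being flagged as an immediate consequence. (The argument does not use that $I$ is monomial, so the same conclusion holds for an arbitrary homogeneous ideal $I$.)
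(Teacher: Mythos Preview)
Your proof is correct and follows essentially the same route as the paper's: Auslander--Buchsbaum together with Corollary~\ref{cor-depth} for the depth equality, the isomorphism $\varPhi$ plus integrality of $\widetilde{R}\subset R$ for the dimension equality, and Corollary~\ref{main-thm2} to preserve the type-one condition in the Gorenstein case. Your observation that the monomial hypothesis is not used is also correct.
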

		\begin{proof}
			By Auslander-Buchsbaum formula and Corollary \ref{cor-depth}, we get that $\depth(R/I)=\depth(R/I^{[m]})$, for all $m \geq 2$. Note that $\varPhi : R \rightarrow \widetilde{R}$ defined  as $\varPhi(X_i)= X_i^m$ is a ring isomorphism.  Therefore, $\dim(R/I)=\dim(\widetilde{R}/\varPhi(I))$. Observe that $\widetilde{R}\subset R$ is integral extension. Thus, $\dim(R/I)=\dim({R}/I^{[m]})$.  Hence, if $R/I$ is Cohen-Macaulay,  then $R/I^{[m]}$ is Cohen-Macaulay. Now, assume that $R/I$ is Gorenstein. Thus, by \cite[Theorem 3.2.10]{Her1} $R/I$ is Cohen-Macaulay of type $1$. Hence,  the assertion follows from Corollary \ref{main-thm2}.
	\end{proof} }   
	We now compute graded Betti numbers of $p^e$-th Frobenius power of $I$  in terms of graded Betti numbers of $I$.
	\begin{thm}\label{frob-thm}
		Assume that $\cha(\K) =p>0$.    Let $I\subset R $ be a homogeneous  ideal. Then,  $\beta_{i,j}(R /I^{[p^e]})$ is a non-zero graded Betti number  only if $j=p^ek$ for some $k$, and $\beta_{i,k}(R /I)$ is a non-zero graded Betti number. Moreover, $\beta_{i,p^ek}(R /I^{[p^e]})=\beta_{i,k}(R /I)$, for every $k,$ and $i \geq 0$.
	\end{thm}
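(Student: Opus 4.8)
The plan is to reduce the statement to the square-power case already established in Corollary~\ref{main-thm2}, by factoring the iterated Frobenius. Since $\cha(\K)=p$, the ``coefficient Frobenius'' $\sigma\colon R\to R$, defined by $\sigma\bigl(\sum_\alpha c_\alpha X^\alpha\bigr)=\sum_\alpha c_\alpha^{p^e}X^\alpha$ (raise each coefficient to its $p^e$-th power, leave the variables fixed), is a ring endomorphism --- the identity $(a+b)^{p^e}=a^{p^e}+b^{p^e}$ being applied here inside $\K$ --- it is injective because $\K$ is a field, and it is graded of degree $0$. For every $f\in R$ one checks directly that $\mathcal{F}^{[p^e]}\bigl(\sigma(f)\bigr)=f^{p^e}$, i.e.\ $\mathcal{F}^{[p^e]}\circ\sigma=F^e$ is the $e$-th Frobenius endomorphism. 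Writing $I=\langle f_1,\dots,f_k\rangle$, the ideal $\sigma(I)R$ is generated by $\sigma(f_1),\dots,\sigma(f_k)$, and the characteristic-$p$ identities give $I^{[p^e]}=\langle f_1^{p^e},\dots,f_k^{p^e}\rangle$; so by the Lemma above establishing $J^{[m]}=\langle g_1^{[m]},\dots,g_r^{[m]}\rangle$ for any generators $g_1,\dots,g_r$ of $J$ (applied to $J=\sigma(I)R$ and $m=p^e$),
\[
\bigl(\sigma(I)R\bigr)^{[p^e]}
=\langle\,\mathcal{F}^{[p^e]}(\sigma(f_1)),\dots,\mathcal{F}^{[p^e]}(\sigma(f_k))\,\rangle
=\langle\,f_1^{p^e},\dots,f_k^{p^e}\,\rangle
=I^{[p^e]}.
\]
Hence it suffices to prove the auxiliary fact $\beta_{i,k}(R/\sigma(I)R)=\beta_{i,k}(R/I)$ for all $i,k$, and then apply Corollary~\ref{main-thm2} to the ideal $\sigma(I)R$.

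For this auxiliary fact I would mimic the proof of Theorem~\ref{main-thm-free} with $\sigma$ in place of $\mathcal{F}^{[m]}$. Let $A:=\sigma(R)=\K^{p^e}[X_1,\dots,X_n]$, a graded subring of $R$; then $\sigma\colon R\to A$ is a graded ring isomorphism carrying $R/I$ to $A/\sigma(I)$. The crucial observation is that $R$ is a \emph{free} $A$-module: indeed $R=\K\otimes_{\K^{p^e}}A$ as graded $A$-algebras, so any $\K^{p^e}$-vector-space basis of $\K$ is an $A$-module basis of $R$ (this basis is infinite when $[\K:\K^{p}]=\infty$, which causes no harm). In particular $R$ is flat over $A$ and $R\otimes_A A(-j)\cong R(-j)$ for all $j$. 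Applying the isomorphism $\sigma$ entrywise to the minimal graded free resolution $(\mathfrak{F}_\bullet,\phi_\bullet)$ of $R/I$ yields a graded free resolution of $A/\sigma(I)$ over $A$ with the same graded ranks, and tensoring this over $A$ with the flat module $R$ gives a graded free resolution of $R\otimes_A(A/\sigma(I))=R/\sigma(I)R$ whose $i$-th term is $\bigoplus_k R(-k)^{\beta_{i,k}(R/I)}$.

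The step I expect to demand the most care is checking that this last resolution is \emph{minimal}: extension of scalars along a flat map transports resolutions but not minimality automatically. One verifies it by inspecting the entries of the differentials. The entries of $\phi_i$ lie in $\mathfrak{m}=(X_1,\dots,X_n)$, and $\sigma(\mathfrak{m})\subseteq\mathfrak{m}$ because $\sigma$ fixes the variables; hence after applying $\sigma$ and extending scalars to $R$ the differentials still have all entries in $\mathfrak{m}$, so the complex has no invertible entry and is therefore the minimal graded free resolution of $R/\sigma(I)R$. This gives $\beta_{i,k}(R/\sigma(I)R)=\beta_{i,k}(R/I)$, and then Corollary~\ref{main-thm2} applied to $\sigma(I)R$, together with $(\sigma(I)R)^{[p^e]}=I^{[p^e]}$, shows that $\beta_{i,j}(R/I^{[p^e]})$ is nonzero only when $p^e\mid j$ and $\beta_{i,k}(R/I)$ is nonzero, and that $\beta_{i,p^ek}(R/I^{[p^e]})=\beta_{i,k}(R/I)$ for all $i\ge 0$ and all $k$, which is the assertion. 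One could instead bypass Corollary~\ref{main-thm2} and run the scalar-extension argument directly with $F^e$ itself and the subring $\K^{p^e}[X_1^{p^e},\dots,X_n^{p^e}]$, at the modest extra cost of also tracking that $F^e$ multiplies internal degrees by $p^e$.
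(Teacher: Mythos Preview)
Your argument is correct, and it takes a genuinely different route from the paper's proof. The paper argues directly in the style of Peskine--Szpiro: starting from the minimal free resolution $(\mathfrak{F}_\bullet,\phi_\bullet)$ of $R/I$, it raises every matrix entry to its $p^e$-th power, observes that this still gives a complex because the Frobenius is a ring endomorphism, and then invokes the Buchsbaum--Eisenbud exactness criterion. The key point there is that $I_{r_i}(\phi_i^{[p^e]})=(I_{r_i}(\phi_i))^{[p^e]}$ has the same radical as $I_{r_i}(\phi_i)$, so the grade conditions persist; minimality is then automatic because the entries land in $\mathfrak{m}$.

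Your approach instead factors $F^e=\mathcal{F}^{[p^e]}\circ\sigma$ into the square-power map and the coefficient Frobenius, thereby recycling Corollary~\ref{main-thm2} for the variable part and isolating the coefficient part as a flat base change along $A=\K^{p^e}[X_1,\dots,X_n]\hookrightarrow R$. This is conceptually clean: it separates what happens to variables from what happens to coefficients, and it explains transparently why nothing depends on whether $\K$ is perfect. The paper's argument, on the other hand, is self-contained (it does not rely on Theorem~\ref{main-thm-free} or Corollary~\ref{main-thm2}) and avoids introducing the auxiliary ideal $\sigma(I)R$ and the subring $A$; the price is appealing to the acyclicity criterion rather than to simple flatness. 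Your closing remark that one can bypass Corollary~\ref{main-thm2} by running the scalar-extension argument directly over $\K^{p^e}[X_1^{p^e},\dots,X_n^{p^e}]$ is essentially the viewpoint underlying Theorem~\ref{main-thm-free}, so the two proofs ultimately converge once unwound, but the intermediate packaging is different.
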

	\begin{proof} 
		{
			Let $(\mathfrak{F}_{\bullet}, \phi_{\bullet})$ be the graded minimal free  resolution of $R/I$. For $ i\geq 0$, set $$\mathfrak{F}_i^{[p^e]}=\bigoplus_kR(-p^ek)^{\beta_{i,k}(R/I)},$$ and the matrix $\phi_{i}^{[p^e]}$ given by $$(\phi_i^{[p^e]})_{k,l}= ((\phi_i)_{k,l})^{p^e}.$$  For each $ i\geq 1$, $\phi_i^{[p^e]} \circ \phi_{i+1}^{[p^e]} =(\phi_i \circ \phi_{i+1})^{[p^e]}=0$.   Now, we prove that  $(\mathfrak{F}_{\bullet}^{[p^e]}, \phi_{\bullet}^{[p^e]})$ is a free resolution of $R/I^{[p^e]}$. The proof is similar to the proof of \cite[Theorem 8.2.7]{Her1}. We prove here for sake of completeness.   Set $r_i= \sum_{j=i}^{t} (-1)^{j-i}\text{rank}(\mathfrak{F}_j)$.  Since $(\mathfrak{F}_{\bullet}, \phi_{\bullet})$ is a free resolution,  $\text{grade}\; I_{r_i}(\phi_i) \geq i$ for $i \geq 1$, where $I_{r_i}(\phi_i) $ is an ideal of $R $ generated by $r_i \times r_i $ minors of $\phi_i$. Note that  $r_i= \sum_{j=i}^{t} (-i)^{j-i}\text{rank}(\mathfrak{F}_j^{[p^e]})$, and $(I_{r_i}(\phi_i))^{[p^e]}=I_{r_i}(\phi_i^{[p^e]}) $. Since $(I_{r_i}(\phi_i))^{[p^e]}$  and $I_{r_i}(\phi_i) $ have same radical,   for $i \geq 1$,  $\text{grade}\; I_{r_i}(\phi_i^{[p^e]}) \geq i$. Thus, by \cite[Theorem 1.4.13]{Her1}, $(\mathfrak{F}_{\bullet}^{[p^e]}, \phi_{\bullet}^{[p^e]})$ is a free resolution of $R/I^{[p^e]}$. Note that for each $i \geq 1$, $\phi_i^{[p^e]}$ is a graded homomorphism of degree $0$. Therefore,  $(\mathfrak{F}_{\bullet}^{[p^e]}, \phi_{\bullet}^{[p^e]})$ is a graded free resolution of $R/I^{[p^e]}$. Hence, the assertion follows from \cite[Theorem 1.3.1]{Her1}.}
	\end{proof}

	One important algebraic invariant which is associated with the minimal free resolution of finitely generated graded module is the    Castelnuovo-Mumford regularity, henceforth called regularity. The 
	regularity of $R/I$, denoted by $\reg(R/I)$, is defined as 
	\[
	\reg(R/I):=\max \{j -i: \beta_{i,j}(R/I) \neq 0\}.
	\] A nonzero graded Betti number $\beta_{i,j}(R/I)$ is called an \textit{extremal Betti number}, if $\beta_{r,s}(R/I)=0$ 
	for all pairs $(r,s)\neq (i,j)$ with $r\geq i$ and $s\geq j$. Observe that $R/I$ admits a unique extremal Betti number if and only if 
	$\beta_{p,p+r}(R/I)\neq 0$, where $p =\pd(R/I)$ and $r=\reg(R/I)$.

	Now, we  study extremal Betti numbers  of $R/I^{[m]}$ in terms of extremal Betti numbers of $R/I$.
	\begin{prop}\label{extremal}
		Let $I$ be a homogeneous ideal of $R$, and  $m \geq 2$. If $\beta_{i,j}(R/I^{[m]})$ is an extremal Betti number, then  $j=ml$ for some $l$, and $\beta_{i,l}(R/I)$ is an extremal Betti number.Conversely, if $\beta_{i,j}(R/I)$ is an extremal Betti number, then $\beta_{i,mj}(R/I^{[m]})$ is an extremal Betti number.
	\end{prop}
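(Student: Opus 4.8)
The plan is to reduce everything to Corollary \ref{main-thm2}, which gives a value-preserving bijection between the nonzero graded Betti numbers of $R/I^{[m]}$ and those of $R/I$, namely $\beta_{i,k}(R/I)\neq 0$ if and only if $\beta_{i,mk}(R/I^{[m]})\neq 0$, with every nonzero $\beta_{i,j}(R/I^{[m]})$ forcing $m\mid j$. Under this dictionary, being an extremal Betti number is a purely combinatorial condition about the partial order $(r,s)\geq (i,j)$ (i.e. $r\geq i$ and $s\geq j$) on the support of the resolution, and the only thing to check is that the rescaling $k\mapsto mk$ on the second coordinate respects that order. Since $m\geq 2>0$, the map $k\mapsto mk$ is strictly increasing, so it preserves and reflects the partial order and cannot collapse two distinct comparable pairs; this is the one point that needs a line of care.

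For the forward direction I would argue as follows. Suppose $\beta_{i,j}(R/I^{[m]})$ is extremal. In particular it is nonzero, so by Corollary \ref{main-thm2} we have $j=ml$ for some $l$ with $\beta_{i,l}(R/I)\neq 0$. Assume, for contradiction, that $\beta_{i,l}(R/I)$ is not extremal: there is a pair $(r,s)\neq (i,l)$ with $r\geq i$, $s\geq l$, and $\beta_{r,s}(R/I)\neq 0$. Then Corollary \ref{main-thm2} gives $\beta_{r,ms}(R/I^{[m]})\neq 0$, and $r\geq i$, $ms\geq ml=j$; moreover $(r,ms)\neq (i,j)$, for if $r=i$ and $ms=ml$ then $s=l$, contradicting $(r,s)\neq (i,l)$. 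This contradicts the extremality of $\beta_{i,j}(R/I^{[m]})$, so $\beta_{i,l}(R/I)$ is extremal.

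The converse is the mirror image. Suppose $\beta_{i,j}(R/I)$ is extremal; then $\beta_{i,mj}(R/I^{[m]})=\beta_{i,j}(R/I)\neq 0$ by Corollary \ref{main-thm2}. If $\beta_{i,mj}(R/I^{[m]})$ were not extremal, there would be a pair $(r,s)\neq (i,mj)$ with $r\geq i$, $s\geq mj$, and $\beta_{r,s}(R/I^{[m]})\neq 0$. By Corollary \ref{main-thm2}, $s=mk$ for some $k$ with $\beta_{r,k}(R/I)\neq 0$; from $mk=s\geq mj$ we get $k\geq j$, and $(r,k)\neq (i,j)$ since otherwise $s=mk=mj$. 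This contradicts the extremality of $\beta_{i,j}(R/I)$. Hence $\beta_{i,mj}(R/I^{[m]})$ is extremal. There is no serious obstacle here beyond bookkeeping: the entire content sits in Theorem \ref{main-thm-free}/Corollary \ref{main-thm2}, and the only subtlety is the strict monotonicity of multiplication by $m$ used to guarantee $(r,s)\neq(i,l)\Rightarrow(r,ms)\neq(i,ml)$ and $mk=mj\Rightarrow k=j$.
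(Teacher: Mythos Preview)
Your proof is correct and follows essentially the same approach as the paper: both directions reduce entirely to Corollary \ref{main-thm2} and the order-preserving injectivity of $k\mapsto mk$ on the second index. The only cosmetic difference is that the paper argues directly (showing every $(k,l)$ northeast of the candidate has vanishing Betti number) while you argue by contradiction (transporting a witness to non-extremality across the bijection); these are contrapositives of one another.
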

	\begin{proof}
		Since $\beta_{i,j}(R/I^{[m]})$ is an extremal Betti number, $\beta_{i,j}(R/I^{[m]}) \neq 0$ and $\beta_{k,l}(R/I^{[m]}) =0$ for $(k,l) \neq (i,j)$ and $k \geq i, l \geq j$. By Corollary \ref{main-thm2}, $\beta_{i,j}(R/I^{[m]})=\beta_{i,j/m}(R/I) \neq 0$. Let $(k,l) \neq (i,j/m)$ such that $k \geq i, l \geq j/m$. Then, by Corollary \ref{main-thm2}, $\beta_{k,l}(R/I)=\beta_{k,lm}(R/I^{[m]}) =0$ as $k \geq i$ and $lm \geq j$. Hence, $\beta_{i,j/m}(R/I)$ is an extremal Betti number.
		
		Now, assume that $\beta_{i,j}(R/I)$ is an extremal Betti number.  By Corollary \ref{main-thm2}, $\beta_{i,mj}(R/I^{[m]})=\beta_{i,j}(R/I) \neq 0$.  Let $(k,l) \neq (i,mj)$ such that $k \geq i$ and $l \geq mj$. If $l$ is not multiple of $m$, then, by Corollary \ref{main-thm2}, $\beta_{k,l}(R/I^{[m]})=0$. Thus, we may assume that $l=ml'$, for some $l'$. Then, $\beta_{k,l}(R/I^{[m]})=\beta_{k,l'}(R/I)=0$ as $k \geq i$ and $l' \geq j$. Hence, $\beta_{i,mj}(R/I^{[m]})$ is an extremal Betti number.
	\end{proof}
	The above result also holds for Frobenius powers of homogeneous ideals. 
	\begin{prop}\label{extremal-frobenius}
		Assume that $\cha(\K) =p>0$. Let $I$ be a homogeneous ideal of $R$. If  $\beta_{i,j}(R/I^{[p^e]})$ is an extremal Betti number, then  $j=p^el$ for some $l$, and $\beta_{i,l}(R/I)$ is an extremal Betti number. Conversely, if $\beta_{i,j}(R/I)$ is an extremal Betti number, then $\beta_{i,p^ej}(R/I^{[p^e]})$ is an extremal Betti number.
	\end{prop}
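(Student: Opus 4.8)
The plan is to repeat the argument of Proposition~\ref{extremal} verbatim, with Theorem~\ref{frob-thm} taking over the role that Corollary~\ref{main-thm2} played there. The point is that Theorem~\ref{frob-thm} describes the bigraded Betti table of $R/I^{[p^e]}$ as the dilation of that of $R/I$ by the map $(i,k)\mapsto(i,p^ek)$ on internal degrees: a Betti number $\beta_{i,j}(R/I^{[p^e]})$ is nonzero only when $p^e\mid j$, and then $\beta_{i,p^ek}(R/I^{[p^e]})=\beta_{i,k}(R/I)$. Since $p^e>0$, this dilation is an order-preserving and order-reflecting bijection of the support onto the sublattice $\{(i,j):p^e\mid j\}$, and extremality is a statement purely about this partial order, so it transfers in both directions.

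First I would treat the forward implication: if $\beta_{i,j}(R/I^{[p^e]})$ is extremal, then by Theorem~\ref{frob-thm} we must have $j=p^el$ for some $l$ and $\beta_{i,l}(R/I)=\beta_{i,j}(R/I^{[p^e]})\neq 0$; to see $\beta_{i,l}(R/I)$ is extremal, note that any pair $(k,l')\neq(i,l)$ with $k\geq i$, $l'\geq l$ pulls back to the pair $(k,p^el')\neq(i,j)$ with $k\geq i$, $p^el'\geq j$, whose vanishing (given) forces $\beta_{k,l'}(R/I)=\beta_{k,p^el'}(R/I^{[p^e]})=0$. For the converse I would run the same dictionary in reverse: given $\beta_{i,j}(R/I)$ extremal, Theorem~\ref{frob-thm} gives $\beta_{i,p^ej}(R/I^{[p^e]})=\beta_{i,j}(R/I)\neq 0$, and for any $(k,l)\neq(i,p^ej)$ with $k\geq i$, $l\geq p^ej$ one distinguishes the case $p^e\nmid l$ (where the Betti number vanishes automatically by Theorem~\ref{frob-thm}) from the case $l=p^el'$ with $l'\geq j$ (where it equals $\beta_{k,l'}(R/I)=0$ by extremality of $\beta_{i,j}(R/I)$).

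I do not anticipate any genuine difficulty: the only input beyond Theorem~\ref{frob-thm} is the triviality that multiplying by $p^e$ both preserves and reflects the inequality between internal degrees, which is exactly the role played by $m$ in Proposition~\ref{extremal}. Indeed, the cleanest writeup is probably just to observe that the proof of Proposition~\ref{extremal} applies word for word with $p^e$ in place of $m$ and Theorem~\ref{frob-thm} in place of Corollary~\ref{main-thm2}.
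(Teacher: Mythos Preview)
Your proposal is correct and matches the paper's own proof, which simply states that the argument is similar to that of Proposition~\ref{extremal}. Replacing Corollary~\ref{main-thm2} by Theorem~\ref{frob-thm} and $m$ by $p^e$ is exactly the intended translation.
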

	\begin{proof} The proof is similar to the proof of Proposition \ref{extremal}.
	\end{proof}
	We now compute the regularity of square power of homogeneous ideals and further show that it matches with the regularity of Frobenius power of the same when the characteristic of the underlying field is prime. {This result is a refinement of \cite[Lemma 3.7 (b)]{nvv}, where $R/I$ is assumed to be Cohen-Macaulay.}
	\begin{cor}\label{reg}
		Let $I$ be a homogeneous ideal of $R$. Let $r=\reg(R/I)$ and $i$ be the unique non-negative integer such that  $\beta_{i,i+r}(R/I)$ is an extremal Betti number. Then,  for any $m \geq 2$,  $\reg(R/I^{[m]})=mr+(m-1)i.$  Furthermore, if $\cha(\K)=p >0$, then $\reg(R/I^{[p^e]})=p^er+(p^e-1)i$, where $I^{[p^e]}$ denotes the Frobenius power of the ideal $I$.
	\end{cor}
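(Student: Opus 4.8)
The plan is to read $\reg(R/I^{[m]})$ directly off the graded Betti numbers of $R/I$ via Corollary \ref{main-thm2}, and then to run an elementary optimization that is pinned down by the extremal Betti number on the top diagonal. First I would record that, by Corollary \ref{main-thm2}, the only nonzero graded Betti numbers of $R/I^{[m]}$ are $\beta_{i,mk}(R/I^{[m]})=\beta_{i,k}(R/I)$ with $\beta_{i,k}(R/I)\neq 0$; hence
\[
\reg(R/I^{[m]})=\max\{\, mk-i \;:\; \beta_{i,k}(R/I)\neq 0 \,\}.
\]
Write $r=\reg(R/I)$ and let $i_0$ be the given non-negative integer with $\beta_{i_0,i_0+r}(R/I)$ extremal (existence and uniqueness are part of the setup: $j-i$ strictly decreases along the corners of the Betti table, so exactly one corner lies on the diagonal $j-i=r$). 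Call a pair $(i,k)$ with $\beta_{i,k}(R/I)\neq 0$ a Betti position of $R/I$.

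The heart of the proof is the inequality $mk-i\leq mr+(m-1)i_0$ for every Betti position $(i,k)$ of $R/I$, together with equality at $(i,k)=(i_0,i_0+r)$. Equality is immediate: $\beta_{i_0,i_0+r}(R/I)\neq 0$ gives $\beta_{i_0,m(i_0+r)}(R/I^{[m]})\neq 0$ by Corollary \ref{main-thm2}, and $m(i_0+r)-i_0=mr+(m-1)i_0$. For the inequality I would first note that every Betti position satisfies $k-i\leq r$ by the definition of regularity, and then prove the extra bound $k\leq i_0+r$: if $k>i_0+r$, then $k-i\leq r$ forces $i\geq k-r>i_0$, so $(i,k)\neq(i_0,i_0+r)$ is a Betti position in the region $\{i\geq i_0,\ k\geq i_0+r\}$, contradicting the extremality of $\beta_{i_0,i_0+r}(R/I)$. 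Writing $mk-i=(m-1)k+(k-i)$ and using $k\leq i_0+r$ and $k-i\leq r$ gives $mk-i\leq(m-1)(i_0+r)+r=mr+(m-1)i_0$. Combining the two facts yields $\reg(R/I^{[m]})=mr+(m-1)i_0$, the asserted formula.

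For the Frobenius statement, when $\cha(\K)=p>0$ the same argument goes through verbatim with $p^e$ in place of $m$: Theorem \ref{frob-thm} supplies $\beta_{i,p^ek}(R/I^{[p^e]})=\beta_{i,k}(R/I)$ together with the fact that these are the only nonzero graded Betti numbers, Proposition \ref{extremal-frobenius} gives the corresponding correspondence of extremal Betti numbers, and the optimization above then produces $\reg(R/I^{[p^e]})=p^er+(p^e-1)i_0$.

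I do not anticipate a serious obstacle; the only care needed is the bookkeeping identity $mk-i=(m-1)k+(k-i)$ and the short extremality argument upgrading $k-i\leq r$ to $k\leq i_0+r$ on the whole Betti table. One could equivalently phrase the proof through Proposition \ref{extremal}: the extremal Betti numbers of $R/I^{[m]}$ are precisely the $\beta_{i,mj}(R/I^{[m]})=\beta_{i,j}(R/I)$ with $\beta_{i,j}(R/I)$ extremal, among the extremal Betti numbers of $R/I$ the value $j-i$ is maximized uniquely at $(i_0,i_0+r)$, and since the regularity is the maximum of $j-i$ over extremal Betti numbers this again gives $\reg(R/I^{[m]})=m(i_0+r)-i_0=mr+(m-1)i_0$.
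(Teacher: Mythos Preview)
Your argument is correct. The route is close to the paper's but packaged a bit differently: the paper proves both inequalities by passing through the extremal Betti numbers of $R/I^{[m]}$ via Proposition~\ref{extremal} (and Proposition~\ref{extremal-frobenius}), showing first that $\beta_{i,m(i+r)}(R/I^{[m]})$ is extremal to get the lower bound, and then pulling the regularity corner $\beta_{k,k+r'}(R/I^{[m]})$ back to an extremal Betti number of $R/I$, together with $k\le i$, to get the upper bound. Your main argument bypasses Proposition~\ref{extremal} entirely and optimizes $mk-i$ directly over all Betti positions of $R/I$ using only Corollary~\ref{main-thm2} and the single extremality hypothesis on $\beta_{i_0,i_0+r}(R/I)$; the key step $k\le i_0+r$ is the explicit content that in the paper's version is hidden inside the assertion ``$k\le i$''. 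Your closing alternative phrasing via Proposition~\ref{extremal} is exactly the paper's proof.
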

	
	\begin{proof}
		It follows from Proposition \ref{extremal} that $\beta_{i,m(i+r)}(R/I^{[m]})$ is an extremal Betti number, and hence, $\beta_{i,m(i+r)}(R/I^{[m]}) \neq 0.$ Therefore, $\reg(R/I^{[m]} )\geq m(r+i)-i=mr+(m-1)i.$
		Set $r'=\reg(R/I^{[m]} )$. Then,  for some $k \leq i$, $\beta_{k,k+r'}(R/I^{[m]})$ is an extremal Betti number. Thus, by Proposition \ref{extremal},
		\begin{align*}\beta_{k,k+r'}(R/I^{[m]})= \beta_{k,(k+r')/m}(R/I) \neq 0.\end{align*} 
		Therefore, $r \geq (k+r')/m -k$, and hence, $r' \leq rm+(m-1)k \leq rm+(m-1)i$ as $ k \leq i$. Hence, $\reg(R/I^{[m]})=mr+(m-1)i.$
		The second assertion follows similarly by using Proposition ~\ref{extremal-frobenius}.
	\end{proof}
	
	We illustrate Corollaries \ref{main-thm2} and \ref{reg} with the following  example.
	\begin{exmp}
		Let $R=\k[X_1,X_2,X_3,X_4]$ and $I=(X_1X_2,X_1X_3,X_1X_4,X_2X_3,X_2X_4,X_3X_4).$ The following is the Betti diagram of $R/I$:
		
		\begin{center}
			$\begin{matrix}
			&0&1&2&3\\\text{total:}&1&6&8&3\\
			\text{0:}&1&\text{.}&\text{.}&\text{.}\\
			\text{1:}&\text{.}&6&8&3\\\end{matrix}$        
		\end{center}    
		The Betti diagram of $R/I^{[2]}$ is
		
		\begin{center}
			$\begin{matrix}
			&0&1&2&3\\\text{total:}&1&6&8&3\\
			\text{0:}&1&\text{.}&\text{.}&\text{.}\\
			\text{1:}&\text{.}&\text{.}&\text{.}&\text{.}\\
			\text{2:}&\text{.}&\text{.}&\text{.}&\text{.}\\
			\text{3:}&\text{.}&6&\text{.}&\text{.}\\\text{4:}&\text{.}&\text{.}&8&\text{.}\\
			\text{5:}&\text{.}&\text{.}&\text{.}&3\\\end{matrix}$
		\end{center}    
		One can note that $R/I$ has linear resolution, but the resolution of $R/I^{[2]}$ is not linear.    
	\end{exmp}    
	Now, we prove an auxiliary result.
	\begin{lem}\label{tech-lem11}
		Let $I\subset R$ be a homogeneous ideal. Then,  for all $m\geq 2 $ and $s\geq 2 $,
		$$ (I^{[m]})^s= (I^s)^{[m]}.$$
	\end{lem}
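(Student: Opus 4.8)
The plan is to prove the identity by directly matching generating sets, using two facts already available: the (unlabeled) lemma above, whose proof shows that for \emph{any} ideal $J=\langle g_1,\dots,g_l\rangle$ of $R$ one has $J^{[m]}=\langle g_1^{[m]},\dots,g_l^{[m]}\rangle$, and the multiplicativity $(fg)^{[m]}=f^{[m]}g^{[m]}$ from Remark \ref{rmk}.

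First I would fix a finite generating set $I=\langle f_1,\dots,f_k\rangle$. Applying the lemma to $I$, we get $I^{[m]}=\langle f_1^{[m]},\dots,f_k^{[m]}\rangle$, and hence $(I^{[m]})^s$ is generated by all $s$-fold products $f_{i_1}^{[m]}\cdots f_{i_s}^{[m]}$ with $1\le i_1,\dots,i_s\le k$. On the other side, $I^s$ is generated by the products $f_{i_1}\cdots f_{i_s}$ with $1\le i_1,\dots,i_s\le k$; applying the lemma now to the ideal $J=I^s$ (legitimate, since that lemma was proved for an arbitrary ideal), $(I^s)^{[m]}$ is generated by the elements $(f_{i_1}\cdots f_{i_s})^{[m]}$. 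By Remark \ref{rmk}, $(f_{i_1}\cdots f_{i_s})^{[m]}=f_{i_1}^{[m]}\cdots f_{i_s}^{[m]}$, so $(I^s)^{[m]}$ and $(I^{[m]})^s$ have literally the same generating set, and therefore coincide.

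I expect essentially no obstacle here: once the two generating sets are written down, the statement is immediate. The only point meriting a word of care is the second application of the lemma with $I^s$ in place of $I$ (one must note the lemma's statement is for a general ideal, not only for $I$), and, if one prefers, the observation that homogeneity of $I$ plays no role in the argument and is retained only because it is the standing hypothesis of this section; for $m=1$ or $s=1$ the asserted equality is trivial, which is why we restrict to $m,s\ge 2$. An alternative, equally short route would be to check the two inclusions separately — $(I^s)^{[m]}\subseteq(I^{[m]})^s$ is clear from multiplicativity applied to generators of $I^s$, and the reverse inclusion from expanding a product of $s$ elements of $I^{[m]}$ — but the generator-matching argument is the cleanest.
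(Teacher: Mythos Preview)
Your proposal is correct and follows essentially the same approach as the paper: fix generators $f_1,\dots,f_k$ of $I$, note that $I^{[m]}=\langle f_1^{[m]},\dots,f_k^{[m]}\rangle$, and then match the generating sets of $(I^{[m]})^s$ and $(I^s)^{[m]}$ via the multiplicativity $(f_{i_1}\cdots f_{i_s})^{[m]}=f_{i_1}^{[m]}\cdots f_{i_s}^{[m]}$. The only cosmetic difference is that the paper indexes the generators of $I^s$ by exponent vectors $(a_1,\dots,a_k)$ with $\sum a_i=s$ rather than by tuples $(i_1,\dots,i_s)$, and your remark that homogeneity plays no role is accurate.
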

	\begin{proof} Let $\lbrace f_1, \ldots, f_r  \rbrace $ be a  minimal generating set of $I$. Then, $I^{[m]} = \langle f_1^{[m]},\ldots,f_r^{[m]} \rangle$. Also, $\lbrace f_1^{a_1}\ldots f_r^{a_r} \; : \; a_i \in \mathbb{N}, \;\sum_{i=1}^r a_i =s  \rbrace $ generates $I^s$. Note that $(f_1^{a_1}\ldots f_r^{a_r})^{[m]} =(f_1^{[m]})^{a_1}\ldots (f_r^{[m]})^{a_r}$ for all  $a_1, \ldots,a_r \in \mathbb{N}$. Therefore, $(I^s)^{[m]} = \langle (f_1^{a_1}\ldots f_r^{a_r})^{[m]} \; : \; a_i \in \mathbb{N}, \;\sum_{i=1}^r a_i =s  \rangle = \langle (f_1^{[m]})^{a_1}\ldots (f_r^{[m]})^{a_r} \; : \; a_i \in \mathbb{N}, \;\sum_{i=1}^r a_i =s  \rangle = (I^{[m]})^s$. Hence, the assertion follows.
	\end{proof}
	We use square powers of homogeneous ideals to prove the following result:
	\begin{thm}\label{extremal-power}
		Let $I$ be a homogeneous ideal. Then, there exists $i$ such that $\beta_{i,i+\reg(R/I^s)}(R/I^s)$ is an extremal Betti number for $s \gg 0$.
	\end{thm}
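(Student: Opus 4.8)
The idea is to leverage the fact that square powers commute with taking powers of an ideal (Lemma~\ref{tech-lem11}) to transport the asymptotic linearity of regularity from $\{I^s\}$ to $\{(I^{[m]})^s\}$ and read off the extremal Betti number in a range of shifts that we control. By the Cutkosky--Herzog--Trung and Kodiyalam theorems, there exist non-negative integers $a,b$ (depending only on $I$) with $\reg(R/I^s)=as+b$ for all $s\gg 0$. Fix $s$ large enough that this holds, and also large enough that the projective dimension $\pd(R/I^s)$ has stabilized to its eventual value $p$ (this happens for $s\gg 0$ by the analogous stability result for projective dimension/depth of powers). I want to show that for such $s$ the \emph{top} homological spot $i=p$ carries an extremal Betti number in degree $p+\reg(R/I^s)$, i.e. $\beta_{p,p+\reg(R/I^s)}(R/I^s)\neq 0$ and this is the unique extremal Betti number.

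\textbf{Key steps.} First, apply the square-power machinery to $J:=I^s$: by Corollary~\ref{main-thm2}, the nonzero graded Betti numbers of $R/J^{[m]}$ are exactly $\beta_{i,mk}(R/J^{[m]})=\beta_{i,k}(R/J)$, and by Corollary~\ref{cor-depth}, $\pd(R/J^{[m]})=\pd(R/J)=p$. Now combine Lemma~\ref{tech-lem11}: $J^{[m]}=(I^{[m]})^s=(I^s)^{[m]}$, so for any fixed $m\ge 2$ the sequence $\{(I^{[m]})^s\}_s$ consists of $m$-th square powers of the powers of $I$. By Corollary~\ref{reg} applied with the ideal $I^s$ (and the unique $i_s$ with $\beta_{i_s,i_s+\reg(R/I^s)}(R/I^s)$ extremal), we get
\[
\reg\bigl(R/(I^{[m]})^s\bigr)=m\cdot\reg(R/I^s)+(m-1)\,i_s .
\]
On the other hand, since the regularity of powers of the \emph{fixed} homogeneous ideal $I^{[m]}$ is itself eventually linear, there are constants $a',b'$ with $\reg(R/(I^{[m]})^s)=a's+b'$ for $s\gg 0$. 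Plugging in $\reg(R/I^s)=as+b$ gives
\[
a's+b'=m(as+b)+(m-1)\,i_s=mas+mb+(m-1)\,i_s
\]
for all $s\gg 0$, forcing $(m-1)\,i_s=(a'-ma)s+(b'-mb)$ to be an \emph{affine} function of $s$; since $m-1\ge 1$ and the $i_s$ are non-negative integers bounded above by $n=\pd(R)$, the only possibility is that $i_s$ is \emph{eventually constant}, say $i_s=i$ for all $s\gg 0$. Thus there is a single homological degree $i$ such that $\beta_{i,i+\reg(R/I^s)}(R/I^s)$ is (the unique) extremal Betti number of $R/I^s$ for all $s\gg 0$, which is exactly the claim.

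\textbf{Main obstacle.} The delicate point is justifying that the $i_s$ stabilize rather than merely noting the algebraic identity; the boundedness $0\le i_s\le n$ together with the forced affine dependence on $s$ does the job, but one must be careful that Corollary~\ref{reg} requires $R/I^s$ to admit a \emph{unique} extremal Betti number in the slot $(i_s,i_s+\reg(R/I^s))$ --- this is automatic because the extremal Betti number achieving the maximum of $j-i$ and simultaneously maximal $i$ is always unique by the very definition of extremal (no $\beta_{r,s}$ with $r\ge i_s$, $s\ge i_s+\reg$ other than at $(i_s,i_s+\reg(R/I^s))$), so $i_s$ is well defined. A secondary technical point is ensuring $\reg(R/I^{[m]^s})=a's+b'$ genuinely holds asymptotically: this is just the Cutkosky--Herzog--Trung theorem applied to the single homogeneous ideal $I^{[m]}$, valid in any characteristic. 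I would also remark that one may take $i=\pd(R/I^s)$ for $s\gg 0$ once the projective dimension of powers stabilizes, which pins down $i$ explicitly, though the statement as phrased only asks for existence.
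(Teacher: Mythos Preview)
Your core argument in the ``Key steps'' paragraph is correct and is essentially identical to the paper's own proof: both apply Corollary~\ref{reg} to $I^s$, use Lemma~\ref{tech-lem11} to identify $(I^s)^{[m]}=(I^{[m]})^s$, invoke asymptotic linearity of regularity for $I^{[m]}$, and conclude from the resulting identity $a's+b'=mas+mb+(m-1)i_s$ that $i_s$ is eventually constant. Your justification via boundedness of $i_s$ (forcing the slope of the affine expression to vanish) is in fact slightly more explicit than the paper's ``by comparing''.

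Two caveats. First, your ``Plan'' announces a different and stronger goal---showing that the extremal Betti number sits at $i=\pd(R/I^s)$---which you then (correctly) abandon in the actual argument; you should drop that paragraph, since the projective-dimension stability you invoke is never used. Second, your closing remark that ``one may take $i=\pd(R/I^s)$ for $s\gg 0$'' is not established by anything in the proof: that would amount to asserting that $R/I^s$ has a \emph{unique} extremal Betti number for large $s$, which neither your argument nor the paper's shows. The eventual constant $i$ produced by the argument is the largest homological index at which the regularity is attained, and there is no reason a priori that this equals the projective dimension.
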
    
	\begin{proof}
		It follows from \cite{CHT, vijay} that there exist non-negative integers $a,b$ and $ s_0$ such that $\reg(R/I^s) =as+b$ for all $ s \geq s_0$. For each $s \geq s_0$, there exist $i_s $ such that $\beta_{i_s,i_s+as+b}(R/I^s)$ is an extremal Betti number. Fix $ m \geq 2$. Then, by Corollary \ref{reg}, $$\reg(R/(I^s)^{[m]})=m(as+b)+(m-1)i_s \; \text{ for } s \geq s_0.$$ Now, $I^{[m]}$ is a homogeneous ideal. Therefore, it follows from \cite{CHT, vijay} that there exist non-negative integers $a_m,b_m$ and $s_m$ such that $\reg(R/(I^{[m]})^s) =a_ms+b_m$ for all $ s \geq s_m$. By Lemma \ref{tech-lem11},  $\reg(R/(I^{[m]})^s) =\reg(R/(I^s)^{[m]}).$ Thus,   $a_ms+b_m=m(as+b)+(m-1)i_s$ for $s \geq \max\{s_0,s_m\}$. By comparing, we get $a_m=am$ and $b_m=mb+(m-1)i_s$. Since $b, b_m$ are constant for $ s \geq \max\{s_0,s_m\}$, $i_s$ is a constant for $ s \geq \max\{s_0,s_m\}$. Hence, the assertion follows.
	\end{proof}
	\section{Square Powers of Monomial Ideals}
	In this section, we consider monomial ideals. First note that if $\cha(\K) =p >0$, then for a monomial ideal $I$, the $p^e$-th square power and the $p^e$-th Frobenius power are same.  
	We now prove an auxiliary result about square powers of monomial ideals.
	\begin{lem}\label{tech-lem0}
		Let $I$ and $J$ be monomial ideals of $R $. 
		Then, $(I\cap J ) ^{[m]} = I^{[m]} \cap J^{[m]}$, for $m\geq 2$.
	\end{lem}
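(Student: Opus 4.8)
The plan is to prove the two inclusions separately, and for one of them the monomial hypothesis is not even needed. For $(I\cap J)^{[m]} \subseteq I^{[m]} \cap J^{[m]}$ I would only use that the square power operation is monotone with respect to containment of ideals: if $L \subseteq L'$, then every generator $f^{[m]}$ of $L^{[m]}$ arises from some $f \in L \subseteq L'$, so $f^{[m]} \in (L')^{[m]}$ and hence $L^{[m]} \subseteq (L')^{[m]}$. Applying this to the inclusions $I\cap J \subseteq I$ and $I\cap J \subseteq J$ immediately gives $(I\cap J)^{[m]} \subseteq I^{[m]} \cap J^{[m]}$.

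For the reverse inclusion I would pass to monomials. Write $G(I) = \{X^{\alpha_1},\ldots,X^{\alpha_r}\}$ and $G(J) = \{X^{\gamma_1},\ldots,X^{\gamma_s}\}$ for the minimal monomial generating sets; then, by the lemma identifying the generators of a square power, $I^{[m]} = \langle X^{m\alpha_1},\ldots,X^{m\alpha_r}\rangle$ and similarly for $J^{[m]}$. Since $I^{[m]}$ and $J^{[m]}$ are monomial ideals, so is $I^{[m]}\cap J^{[m]}$, and it suffices to show that every monomial $X^{\beta} \in I^{[m]}\cap J^{[m]}$ already lies in $(I\cap J)^{[m]}$. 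Membership $X^{\beta}\in I^{[m]}$ means $m\alpha_i \le \beta$ componentwise for some $i$, and $X^{\beta}\in J^{[m]}$ means $m\gamma_j \le \beta$ for some $j$.

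The crux is the componentwise identity $m\cdot\max(\alpha_i,\gamma_j) = \max(m\alpha_i,m\gamma_j)$, together with the observation that $X^{\max(\alpha_i,\gamma_j)} = \lcm(X^{\alpha_i},X^{\gamma_j})$ is a monomial belonging to both $I$ and $J$, hence to $I\cap J$. Then $\big(\lcm(X^{\alpha_i},X^{\gamma_j})\big)^{[m]} = X^{m\max(\alpha_i,\gamma_j)} = X^{\max(m\alpha_i,m\gamma_j)}$ divides $X^{\beta}$, because $m\alpha_i\le\beta$ and $m\gamma_j\le\beta$ force $\max(m\alpha_i,m\gamma_j)\le\beta$. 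Since $\lcm(X^{\alpha_i},X^{\gamma_j})\in I\cap J$, its square power lies in $(I\cap J)^{[m]}$, and therefore $X^{\beta}\in(I\cap J)^{[m]}$, completing the reverse inclusion.

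There is no real obstacle here; the only points requiring care are the standard facts that an intersection of monomial ideals is again monomial (so membership can be tested on monomials) and that a monomial lies in a monomial ideal precisely when it is divisible by a generator — these reduce the ideal-theoretic statement to the elementary exponent-vector computation above. I would be explicit about the commutation $m\cdot\max(-,-) = \max(m\cdot-,\,m\cdot-)$, since it is exactly what makes the lcm of generators of $I$ and $J$ behave correctly under the square power.
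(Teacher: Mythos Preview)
Your proof is correct and follows essentially the same idea as the paper: both arguments hinge on the componentwise identity $m\cdot\max(\alpha_i,\gamma_j)=\max(m\alpha_i,m\gamma_j)$, i.e., $\lcm(X^{\alpha_i},X^{\gamma_j})^{[m]}=\lcm(X^{m\alpha_i},X^{m\gamma_j})$. The only cosmetic difference is that the paper invokes the standard fact (Herzog--Hibi, Proposition~1.2.1) that $I\cap J$ is generated by the $\lcm$'s of the generators and computes both sides in one line, whereas you split into two inclusions and verify the harder one element by element; conceptually the arguments are the same.
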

	\begin{proof}
		Let $I$ be minimally generated by the set $\lbrace X^{a_1} ,\hdots , X^{a_k}  \rbrace $, where $a_1, \hdots , a_k \in \mathbb{N}^n$. Then,  for $m\geq 1 $, $I^{[m]}=\langle X^{ma_1} , \hdots , X^{ma_k} \rangle$.
		Let $J$ be minimally generated by  the set $\{ X^{b_1}, \hdots , X^{b_l} \},$ where $b_1,\ldots,b_l \in \mathbb{N}^n$. Therefore, by \cite[Proposition 1.2.1]{Her}, $I\cap J$ is minimally generated by the set $\{ \lcm(X^{a_i},X^{b_j}): 1\leq i \leq k , 1\leq j\leq l \}$. Thus,  \begin{align*}(I\cap J)^{[m]} &= \langle \lcm(X^{a_i},X^{b_j})^m: 1\leq i \leq k , 1\leq j\leq l \rangle \\&=\langle \lcm(X^{ma_i},X^{mb_j}): 1\leq i \leq k , 1\leq j\leq l \rangle \\& = I^{[m]}\cap J^{[m]},\end{align*}
		where the last equality follows from \cite[Proposition 1.2.1]{Her}. Hence, the assertion follows.
	\end{proof}
	Now, we compute primary decomposition of $I^{[m]}$ for any monomial ideal $I$.
	\begin{thm}\label{main-thm1}
		Let $I$ be a monomial ideal with primary decomposition $I= \bigcap\limits_{i=1}^r Q_i$. Then,  $I^{[m]}= \bigcap\limits_{i=1}^r Q_i^{[m]}$ is primary decomposition of $I^{[m]}$ for all $m\geq 2$. Moreover, ${\ass}(I)={\ass}(I^{[m]})$ and ${\Min}(I)={\Min}(I^{[m]})$, for all $m \geq 2$.
	\end{thm}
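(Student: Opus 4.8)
The plan is to bootstrap everything from two facts already available: $(-)^{[m]}$ commutes with finite intersections of monomial ideals (Lemma \ref{tech-lem0}), and $f\in I \iff f^{[m]}\in I^{[m]}$ for every ideal $I$ (the Lemma stated just after Remark \ref{frob power}). Since $I$ is a monomial ideal we may and do take all the $Q_i$ to be \emph{monomial} primary ideals. Iterating Lemma \ref{tech-lem0} gives $\bigl(\bigcap_{i=1}^r Q_i\bigr)^{[m]}=\bigcap_{i=1}^r Q_i^{[m]}$, hence $I^{[m]}=\bigcap_{i=1}^r Q_i^{[m]}$; what remains is to recognise this as a (minimal) primary decomposition and to read off $\ass$ and $\Min$.

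\textbf{Each $Q_i^{[m]}$ is primary.} By the classification of primary monomial ideals (see \cite{Her}), after relabeling variables $Q_i=(X_{j}^{a_j}:j\in S_i)+(u_1,\dots,u_s)$, where $P_i:=\sqrt{Q_i}=(X_j:j\in S_i)$, each $a_j\ge 1$, and each $u_\ell$ is a monomial supported on $\{X_j:j\in S_i\}$. By Remark \ref{rmk}, $Q_i^{[m]}$ is generated by the $[m]$-th powers of these generators; since $(X_j^{a_j})^{[m]}=X_j^{ma_j}$ with $ma_j\ge 1$ and $\operatorname{supp}(u_\ell^{[m]})=\operatorname{supp}(u_\ell)\subseteq\{X_j:j\in S_i\}$, the ideal $Q_i^{[m]}$ is again of exactly this form with the same $S_i$. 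Invoking the classification in the other direction, $Q_i^{[m]}$ is primary with $\sqrt{Q_i^{[m]}}=(X_j:j\in S_i)=P_i=\sqrt{Q_i}$. Together with the first paragraph this shows $I^{[m]}=\bigcap_{i=1}^r Q_i^{[m]}$ is a primary decomposition.

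\textbf{Associated and minimal primes.} Taking radicals, $\sqrt{I^{[m]}}=\bigcap_i\sqrt{Q_i^{[m]}}=\bigcap_i\sqrt{Q_i}=\sqrt{I}$, and therefore $\Min(I^{[m]})=\Min\bigl(\sqrt{I^{[m]}}\bigr)=\Min\bigl(\sqrt{I}\bigr)=\Min(I)$. For associated primes, $\ass(R/I)$ does not depend on the chosen decomposition, so we may assume $I=\bigcap_{i=1}^r Q_i$ is irredundant with the $\sqrt{Q_i}$ pairwise distinct, whence $\ass(R/I)=\{\sqrt{Q_1},\dots,\sqrt{Q_r}\}$. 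The radicals $\sqrt{Q_i^{[m]}}=\sqrt{Q_i}$ stay pairwise distinct, and the decomposition of $I^{[m]}$ stays irredundant: by Lemma \ref{tech-lem0}, $\bigcap_{j\ne i}Q_j^{[m]}=\bigl(\bigcap_{j\ne i}Q_j\bigr)^{[m]}$, and since $A^{[m]}\subseteq B^{[m]}\iff A\subseteq B$ for ideals $A,B$ (immediate from the Lemma after Remark \ref{frob power}: if $f\in A$ then $f^{[m]}\in A^{[m]}\subseteq B^{[m]}$, so $f\in B$), an inclusion $\bigcap_{j\ne i}Q_j^{[m]}\subseteq Q_i^{[m]}$ would force $\bigcap_{j\ne i}Q_j\subseteq Q_i$, a contradiction. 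Hence $I^{[m]}=\bigcap_i Q_i^{[m]}$ is a minimal primary decomposition, and uniqueness of the associated primes of a minimal primary decomposition gives $\ass(R/I^{[m]})=\{\sqrt{Q_1},\dots,\sqrt{Q_r}\}=\ass(R/I)$.

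\textbf{Where the work is.} The only step that is not formal manipulation is the stability of primariness under $(-)^{[m]}$, which rests entirely on the structural description of primary monomial ideals (note in particular that "$\sqrt{Q_i^{[m]}}$ prime" alone would not suffice, e.g. $(X_1^2,X_1X_2)$ is not primary though its radical is). This is also exactly the point where the monomial hypothesis is used: for non-monomial ideals the claim about $\ass$ fails, since $(X_1-X_2)^{[2]}=(X_1^2-X_2^2)$ is not even primary; compare the later counterexample in the paper.
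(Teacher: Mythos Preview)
Your proof is correct. Both your argument and the paper's rest on the same two ingredients: Lemma~\ref{tech-lem0} distributes $(-)^{[m]}$ over a finite intersection of monomial ideals, and each $Q_i^{[m]}$ is again primary with the same radical. The paper, however, tacitly works with the \emph{irreducible} decomposition: it invokes \cite[Theorem~1.3.1]{Her} to say that each $Q_i$ is generated by pure powers of variables, so $Q_i^{[m]}$ is visibly of the same shape, and then runs a short induction on $r$. You instead allow the $Q_i$ to be arbitrary primary monomial ideals and appeal to the structural description $Q_i=(X_j^{a_j}:j\in S_i)+(u_1,\dots,u_s)$; this costs a little more, but it makes the statement literally true for any monomial primary decomposition rather than only the irreducible one. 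You also go beyond the paper by verifying irredundancy of $\bigcap_i Q_i^{[m]}$ via the equivalence $A^{[m]}\subseteq B^{[m]}\iff A\subseteq B$, which lets you read off $\ass$ directly; the paper leaves the ``Moreover'' clause implicit. In short: same skeleton, but your version handles a slightly more general setup and is more explicit about the conclusions on $\ass$ and $\Min$.
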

	\begin{proof}
		We prove this assertion by induction on $r$. If $r=1$, then by \cite[Theorem 1.3.1]{Her}, $I$ is generated by pure powers of the variables, which implies that $I^{[m]}$ is also generated by pure powers of the variables. By \cite[Theorem 1.3.1]{Her},  the assertion follows. Assume that $r \geq 2$ and the result is true for $r-1$. Set $J=\bigcap\limits_{i=1}^{r-1} Q_i$. Note that $I =J \cap Q_r$. By Lemma \ref{tech-lem0}, $I^{[m]}=J^{[m]} \cap Q_r^{[m]}$ for any $m \geq 2$.  By induction, $J^{[m]}= \bigcap\limits_{i=1}^{r-1} Q_i^{[m]}$ is primary decomposition of $J^{[m]}$ for all $m\geq 2$. Therefore, $I^{[m]}= \bigcap\limits_{i=1}^r Q_i^{[m]}$. Since $Q_i^{[m]}$ is generated by pure powers of variables, $ \bigcap\limits_{i=1}^r Q_i^{[m]}$ is primary decomposition of $I^{[m]}$, by \cite[Theorem 1.3.1]{Her}.
	\end{proof}
	The following example shows that $\ass(I)$ and $\ass(I^{[m]})$ need not be the same in general.
	\begin{exmp}\label{count ass}
		Let $I= \langle X-Y , X-Z \rangle \subset \mathbb{K}[X,Y,Z]$. Observe that $I$ is a prime ideal. If $\cha(\mathbb{K}) \neq 2$, then the associated primes of $I^{[2]} = \langle X^2 -Y^2 , X^2 - Z^2 \rangle $  are $\langle -X+Z ,-X+Y \rangle , \langle - X+ Z , X+Y  \rangle , \langle X+Z , -X +Y \rangle , \langle X+Z , X+Y \rangle $.
	\end{exmp}

	An ideal $I$ in  $R$ is called \textit{normally torsion-free} if $\ass({I^s})\subseteq \ass({I})$ for all $s \geq 1$. Given a normally torsion-free monomial ideal, one can construct a family of normally torsion-free monomial ideals in the following way:
	
	\begin{thm}\label{normally torsion free}
		Let $I \subset R$ be a monomial ideal. If $I$ is normally torsion-free, then $I^{[m]}$ is normally torsion-free, for all $m \geq 2$. 
	\end{thm}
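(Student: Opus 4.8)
The plan is to deduce the statement directly from two facts about monomial ideals already established: the identity $(I^{[m]})^s=(I^s)^{[m]}$ of Lemma~\ref{tech-lem11}, and the equality $\ass(J)=\ass(J^{[m]})$ of Theorem~\ref{main-thm1}, valid for \emph{every} monomial ideal $J$ and every $m\ge 2$. Together these let one match the associated primes of all powers of $I^{[m]}$ with those of the corresponding powers of $I$, after which the hypothesis on $I$ closes the argument.

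Concretely, I would fix $m\ge 2$ and an arbitrary integer $s\ge 1$. Since $I$ is a monomial ideal, so is $I^s$, and Theorem~\ref{main-thm1} applied to $I^s$ gives $\ass\big((I^s)^{[m]}\big)=\ass(I^s)$. Combining this with Lemma~\ref{tech-lem11} (for $s\ge 2$; the case $s=1$ is immediate) yields
\[
\ass\big((I^{[m]})^s\big)=\ass\big((I^s)^{[m]}\big)=\ass(I^s).
\]
Now the assumption that $I$ is normally torsion-free is precisely $\ass(I^s)\subseteq\ass(I)$, and one more application of Theorem~\ref{main-thm1}, this time to $I$ itself, gives $\ass(I)=\ass(I^{[m]})$. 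Chaining these relations,
\[
\ass\big((I^{[m]})^s\big)=\ass(I^s)\subseteq\ass(I)=\ass(I^{[m]}),
\]
and since $s$ was arbitrary this says exactly that $I^{[m]}$ is normally torsion-free.

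There is no real obstacle once Lemma~\ref{tech-lem11} and Theorem~\ref{main-thm1} are in hand; the only point of care is that Lemma~\ref{tech-lem11} is stated for $s\ge 2$, so the base case $s=1$ must be noted separately, where the needed inclusion is the tautology $\ass(I^{[m]})\subseteq\ass(I^{[m]})$. I would also remark that the equality part of the displayed chain is symmetric in $I$ and $I^{[m]}$, so the same reasoning shows that if $I^{[m]}$ is normally torsion-free for some $m\ge 2$ then $I$ is normally torsion-free as well, which gives the ``if and only if'' form of the result mentioned in the introduction.
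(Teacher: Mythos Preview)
Your proof is correct and is essentially identical to the paper's own argument: both chain Lemma~\ref{tech-lem11} with two applications of Theorem~\ref{main-thm1} to obtain $\ass((I^{[m]})^s)=\ass(I^s)\subseteq\ass(I)=\ass(I^{[m]})$. Your explicit handling of the $s=1$ case and the observation about the converse are minor additions, but the approach is the same.
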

	\begin{proof}
		Since $I$ is a normally torsion-free ideal, $\ass({I^s})\subseteq \ass({I})$  for all $s$. Let $ s \geq 1$ and $ m\geq 2$.  By Lemma \ref{tech-lem11}, $(I^{[m]})^s= (I^s)^{[m]}$. Now, by Theorem \ref{main-thm1}, $\ass({(I^{[m]})^s})= \ass({(I^s)^{[m]}})= \ass ( {I^s})\subseteq \ass ({I})= \ass ({I^{[m]}})$. Hence,  $I^{[m]}$ is normally torsion-free for all $m$. 
	\end{proof}
	In the following result, we compute the index of stability of $I^{[m]}$ in terms of index of stability of $I$.

	\begin{thm}\label{index}
		The index of stability of $I$ and $I^{[m]}$ are same for all $m \geq 2$.
	\end{thm}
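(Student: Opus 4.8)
The plan is to reduce the statement to the two facts already in hand: Lemma \ref{tech-lem11}, which identifies the ordinary powers of $I^{[m]}$ with the square powers of the ordinary powers of $I$, and Theorem \ref{main-thm1}, which says that passing to a square power does not change the set of associated primes of a monomial ideal. Throughout, $I\subset R$ is a monomial ideal (as in the rest of this section), and for an ideal $J$ I write $\ass(J)$ for $\ass(R/J)$.

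First I would record that for every $s\geq 1$ one has $\ass\big((I^{[m]})^s\big)=\ass(I^s)$. For $s\geq 2$, Lemma \ref{tech-lem11} gives $(I^{[m]})^s=(I^s)^{[m]}$, and for $s=1$ this equality is trivial; since $I^s$ is again a monomial ideal, Theorem \ref{main-thm1} applies to it and yields $\ass\big((I^s)^{[m]}\big)=\ass(I^s)$. Combining the two equalities proves the claim. In particular the two sequences of sets $\{\ass((I^{[m]})^s)\}_{s\geq 1}$ and $\{\ass(I^s)\}_{s\geq 1}$ coincide term by term.

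Now the index of stability of an ideal is by definition a function of this sequence alone --- it is the least $s_0$ with $\ass(J^s)=\ass(J^{s_0})$ for all $s\geq s_0$ --- so it is immediate that the index of stability of $I^{[m]}$ equals that of $I$. If one prefers an explicit two-inequality argument: letting $s_0$ be the index of stability of $I$, for every $s\geq s_0$ we get $\ass((I^{[m]})^s)=\ass(I^s)=\ass(I^{s_0})=\ass((I^{[m]})^{s_0})$, so the index of stability of $I^{[m]}$ is at most $s_0$; the reverse inequality follows by the symmetric computation, using $\ass(I^s)=\ass((I^{[m]})^s)$ again.

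There is no genuine obstacle here; the substance is entirely contained in Lemma \ref{tech-lem11} and Theorem \ref{main-thm1}. The only points that need a moment's care are that Theorem \ref{main-thm1} must be invoked for the monomial ideal $I^s$ rather than for $I$ itself, and that the degenerate case $s=1$ of Lemma \ref{tech-lem11} should be noted separately since that lemma is stated for $s\geq 2$.
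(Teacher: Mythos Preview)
Your proof is correct and follows the same approach as the paper: both arguments combine Lemma \ref{tech-lem11} with Theorem \ref{main-thm1} to show that $\ass\big((I^{[m]})^s\big)=\ass(I^s)$ for all $s$, from which equality of the indices of stability is immediate. Your write-up is in fact more careful than the paper's, explicitly handling the $s=1$ case and spelling out the final two-inequality argument.
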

	\begin{proof}
		Let $k \geq 2$ and $m \geq 2$. Then,   by Lemma \ref{tech-lem11}, $(I^{[m]})^k=(I^k)^{[m]}$. Therefore, by Theorem \ref{main-thm1}, $\ass({(I^{[m]})^k})= \ass({(I^k)^{[m]}})= \ass ( {I^k})$. Hence, the assertion follows.
	\end{proof}
	Given an ideal $I \subset R$, its $s$-th symbolic power of $I$ is defined as
	\begin{center}
		$$I^{(s)}=\bigcap\limits _{\p\in \Min (I)} (I^s R_{\p} \cap R), $$
	\end{center}
	where $R_{\p}$ is the localisation of $R$ at $\p$.

	Next, we study the depth function $\depth (R/I^{(s)})$. Nguyen and Trung proved \cite[Corollary 3.2]{TrungInv} that if $I$ is a monomial ideal, then $\depth ({R}/{I^{(s)}})$ is asymptotically periodic function i.e. it is a periodic function for $s \gg0$. They also proved that \cite[Theorem 3.3]{TrungInv} if $I$ is a monomial ideal such that $I^{(s)}$ is integrally closed for $s \gg 0$, then  $\depth (R/{I^{(s)}})$ is a convergent function, and \begin{displaymath}
		\lim_{s\to \infty} \depth (R/{I^{(s)}})= n- \dim (F_{\mathfrak{s}}(I)),
	\end{displaymath} where \begin{center}
		$F_{\mathfrak{s}}(I) := \bigoplus_{s\geq0}{I^{(s)}}/{\m I^{(s)}}$
	\end{center}
	and $\m$ is the homogeneous maximal ideal of $R$. Also, they presented a large class of monomial ideals $I$  with the property that $I^{(s)}$ is integrally closed for $s \gg 0$, \cite[Lemma 3.5]{TrungInv}. Here, we give a class of monomial ideals $I$ with the property that $I^{(s)}$ is not integrally closed for all $s \geq 2$ and  $\depth (R/{I^{(s)}})$ is a convergent function, and \begin{displaymath}
		\lim_{s\to \infty} \depth (R/{I^{(s)}})= n - \dim (F_{\mathfrak{s}}(I)).
	\end{displaymath}
	
	First, we prove the following lemma showing that the integral closure of the ordinary power and that of the square power is same for a monomial ideal.    
	\begin{lem}\label{integral}
		Let $I\subset R$ be a monomial ideal. Then,  for $m \geq 1$
		$$\overline{I^{[m]}}= \overline{I^m}.$$
	\end{lem}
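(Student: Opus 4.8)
The plan is to reduce the claim to an equality of Newton polyhedra and then to a standard fact about convex hulls of Minkowski sumsets. Recall that for a monomial ideal $J\subseteq R$ the integral closure $\overline J$ is again a monomial ideal, and a monomial $X^a$ lies in $\overline J$ precisely when $a$ belongs to the Newton polyhedron $\mathrm{NP}(J):=\mathrm{conv}\{b\in\mathbb N^n:X^b\in J\}+\mathbb R^n_{\geq 0}$; moreover $\mathrm{NP}(J)$ is the convex hull of the exponent vectors of \emph{any} monomial generating set of $J$, translated by $\mathbb R^n_{\geq 0}$ (this is classical; see, e.g., \cite{Her}). Hence it suffices to prove $\mathrm{NP}(I^{[m]})=\mathrm{NP}(I^{m})$, and the case $m=1$ is trivial.

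Write $G(I)=\{X^{a_1},\dots,X^{a_k}\}$ for the minimal monomial generators of $I$. On one hand $I^{[m]}=\langle X^{ma_1},\dots,X^{ma_k}\rangle$, so
\[
\mathrm{NP}\big(I^{[m]}\big)=\mathrm{conv}\{ma_1,\dots,ma_k\}+\mathbb R^n_{\geq 0}=m\cdot\mathrm{conv}\{a_1,\dots,a_k\}+\mathbb R^n_{\geq 0}.
\]
On the other hand $I^{m}$ is generated by the monomials $X^{a_{i_1}+\cdots+a_{i_m}}$ with $1\leq i_1,\dots,i_m\leq k$, so $\mathrm{NP}(I^{m})=\mathrm{conv}\{a_{i_1}+\cdots+a_{i_m}:1\leq i_j\leq k\}+\mathbb R^n_{\geq 0}$. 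Comparing, the whole statement comes down to the combinatorial identity
\[
\mathrm{conv}\{a_{i_1}+\cdots+a_{i_m}:1\leq i_j\leq k\}=m\cdot\mathrm{conv}\{a_1,\dots,a_k\}.
\]

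To establish this I would set $S=\{a_1,\dots,a_k\}$, observe that the left-hand set is the $m$-fold Minkowski sumset $S+\cdots+S$, and invoke the elementary facts $\mathrm{conv}(A+B)=\mathrm{conv}(A)+\mathrm{conv}(B)$ for finite $A,B\subseteq\mathbb R^n$ and $C+C=2C$ for a convex set $C$. Iterating gives $\mathrm{conv}(S+\cdots+S)=m\cdot\mathrm{conv}(S)$, which is exactly the displayed identity. Combining the three displays yields $\mathrm{NP}(I^{[m]})=\mathrm{NP}(I^{m})$, hence $\overline{I^{[m]}}=\overline{I^{m}}$.

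There is essentially no real obstacle here; the only points deserving a word of care are the Minkowski-sum identity for convex hulls and the remark that the Newton polyhedron — and therefore the integral closure — of $I^m$ is computed equally well from the redundant list of all $m$-fold products of the generators of $I$ as from a minimal generating set, so that no information is lost in the comparison above.
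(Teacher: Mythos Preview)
Your argument via Newton polyhedra is correct: the characterisation of $\overline{J}$ by $\mathrm{NP}(J)$ is standard for monomial ideals, and the Minkowski identity $\mathrm{conv}(S+\cdots+S)=m\cdot\mathrm{conv}(S)$ is exactly what is needed to match $\mathrm{NP}(I^m)$ with $\mathrm{NP}(I^{[m]})$. Your closing caveat about using a redundant generating set for $I^m$ is also well placed.

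The paper, however, takes a different and rather shorter route. It first notes the trivial inclusion $I^{[m]}\subset I^m$, giving $\overline{I^{[m]}}\subset\overline{I^m}$. For the reverse inclusion it observes that a monomial $u\in I^m$ satisfies $u^m=u^{[m]}\in (I^m)^{[m]}=(I^{[m]})^m$ (the equality of ideals being Lemma~\ref{tech-lem11}), whence $u\in\overline{I^{[m]}}$ directly from the definition of integral closure. Thus the paper never leaves the algebraic side: it exploits the already-established commutation $(I^s)^{[m]}=(I^{[m]})^s$ together with the trivial fact that the $m$-th square power of a single monomial is its ordinary $m$-th power. Your approach, by contrast, bypasses that lemma entirely and replaces it with convex geometry; it is more self-contained in that sense, and makes the equality $\overline{I^m}=\overline{I^{[m]}}$ visually obvious as an equality of polyhedra, at the cost of invoking the Newton-polyhedron description of integral closure and the Minkowski-sum identities.
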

	\begin{proof}
		Since $I^{[m]} \subset I^m$,  $\overline{I^{[m]}}\subset \overline{I^m}$. Let $u \in I^m$ be a monomial. By Lemma \ref{tech-lem11}, $ (I^m)^{[m]}=(I^{[m]})^m $ which implies that $u^m \in (I^{[m]})^m$, and hence, by \cite[Theorem 1.4.2]{Her}, $u  \in \overline{I^{[m]}}$. Therefore, $I^m \subset \overline{I^{[m]}}$. Hence, the assertion follows.
	\end{proof}
	It is clear from Lemma \ref{integral} that if $I$ is a monomial ideal, then $I^{[m]}$ is never integrally closed for $ m \geq 2$.
	\begin{lem}\label{tech-lem1}
		Let $I\subset R$ be a monomial ideal. Then,  for all $m\geq 2 $ and $s\geq 2 $,
		$$(I^{[m]})^{(s)}= (I^{(s)})^{[m]}.$$
	\end{lem}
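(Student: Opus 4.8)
The plan is to reduce the identity to a single‑prime statement about contractions and then glue, using that the square power commutes with finite intersections of monomial ideals. By definition $I^{(s)}=\bigcap_{\p\in\Min(I)}(I^sR_\p\cap R)$; by Theorem~\ref{main-thm1} we have $\Min(I^{[m]})=\Min(I)$, and each such prime is a monomial prime; and by Lemma~\ref{tech-lem11} we have $(I^{[m]})^s=(I^s)^{[m]}$. Hence
\[
(I^{[m]})^{(s)}=\bigcap_{\p\in\Min(I^{[m]})}\bigl((I^{[m]})^sR_\p\cap R\bigr)=\bigcap_{\p\in\Min(I)}\bigl((I^s)^{[m]}R_\p\cap R\bigr),
\]
so it suffices to prove the per‑prime identity
\[
(J^{[m]})R_\p\cap R=(JR_\p\cap R)^{[m]}
\]
for every monomial ideal $J$ and every monomial prime $\p$. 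Indeed, applying this with $J=I^s$ and then using the extension of Lemma~\ref{tech-lem0} to finitely many monomial ideals (an immediate induction on the number of ideals), we get $(I^{[m]})^{(s)}=\bigcap_{\p\in\Min(I)}(I^sR_\p\cap R)^{[m]}=\bigl(\bigcap_{\p\in\Min(I)}(I^sR_\p\cap R)\bigr)^{[m]}=(I^{(s)})^{[m]}$.

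For the per‑prime identity I would use the standard description of the contraction of a monomial ideal at a monomial prime. Write $\p=(X_j:j\in S)$ for some $S\subseteq\{1,\dots,n\}$, and for $a\in\mathbb{N}^n$ let $a^{S}$ be the vector with $(a^S)_j=a_j$ for $j\in S$ and $(a^S)_j=0$ for $j\notin S$. If $X^{a_1},\dots,X^{a_k}$ is the minimal monomial generating set of $J$, then each $\prod_{j\notin S}X_j^{(a_\ell)_j}$ is a unit in $R_\p$, so $JR_\p=\langle X^{(a_1)^{S}},\dots,X^{(a_k)^{S}}\rangle R_\p$; and the monomial ideal $\langle X^{(a_1)^{S}},\dots,X^{(a_k)^{S}}\rangle$, being generated by monomials in the variables $\{X_j:j\in S\}$ alone, is already contracted from $R_\p$. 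Hence $JR_\p\cap R=\langle X^{(a_1)^{S}},\dots,X^{(a_k)^{S}}\rangle$. Since $X^a\mid X^b$ if and only if $X^{ma}\mid X^{mb}$, the monomials $X^{ma_1},\dots,X^{ma_k}$ minimally generate $J^{[m]}$, so the same description applied to $J^{[m]}$ gives $(J^{[m]})R_\p\cap R=\langle X^{(ma_1)^{S}},\dots,X^{(ma_k)^{S}}\rangle$. Finally $(ma_\ell)^{S}=m\,(a_\ell)^{S}$, so $X^{(ma_\ell)^{S}}=\bigl(X^{(a_\ell)^{S}}\bigr)^{[m]}$, and therefore $(J^{[m]})R_\p\cap R=\langle X^{(a_1)^{S}},\dots,X^{(a_k)^{S}}\rangle^{[m]}=(JR_\p\cap R)^{[m]}$, as needed.

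I expect the only genuinely delicate point to be the contraction description itself — in particular the assertion that a monomial ideal generated in the variables $\{X_j:j\in S\}$ equals its own contraction from $R_\p$, which rests on such an ideal being saturated with respect to each $X_j$ with $j\notin S$ together with the fact that, for monomial ideals, contractions along a multiplicative set are computed by monomial colons; this is routine and standard (see \cite{Her}). Everything else is bookkeeping with exponent vectors. One can also argue without localizing, through primary decompositions: writing $I^{(s)}=\bigcap_{\p\in\Min(I)}Q_\p$ with $Q_\p$ the unique $\p$‑primary component of $I^s$, one checks that $Q_\p^{[m]}$ is again $\p$‑primary (its minimal generators involve the same variables, and it still contains a power of each $X_j$, $j\in S$) and is the $\p$‑primary component of $(I^s)^{[m]}=(I^{[m]})^s$ by Theorem~\ref{main-thm1}, and then concludes with the extended Lemma~\ref{tech-lem0} exactly as above.
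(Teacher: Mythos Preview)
Your argument is correct. The paper's own proof is a single line: ``immediate from Theorem~\ref{main-thm1} and the definition of the symbolic power of ideals,'' which, unpacked, is precisely the primary--decomposition route you sketch in your final paragraph (write a monomial primary decomposition of $I^s$, apply Theorem~\ref{main-thm1} to get one for $(I^s)^{[m]}=(I^{[m]})^s$, note $\sqrt{Q_i^{[m]}}=\sqrt{Q_i}$ and $\Min(I^{[m]})=\Min(I)$, and keep only the components over minimal primes on both sides). Your main approach instead proves the per--prime contraction identity $(J^{[m]})R_\p\cap R=(JR_\p\cap R)^{[m]}$ by an explicit exponent--vector computation; this is a genuinely different (and more hands--on) argument that avoids appealing to the primary decomposition of $I^s$ and would survive even without Theorem~\ref{main-thm1}. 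The trade--off is length: the paper's route is one sentence because all the work was already packaged into Theorem~\ref{main-thm1} (and implicitly Lemmas~\ref{tech-lem0} and~\ref{tech-lem11}), whereas your localization computation redoes part of that work at the level of generators. Either way the result follows; for this paper the shorter route is preferable.
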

	\begin{proof}  The assertion is immediate from Theorem \ref{main-thm1} and the definition of the symbolic power of ideals.
	\end{proof}
	We conclude this article with the following result:
	\begin{thm}\label{main-thm3}
		Let $I\subset R$ be a monomial ideal such that $I^{(s)}$ is integrally closed for $s \gg 0$. Then,   the sequence $\{\depth(R/(I^{[m]})^{(s)})\}_{s\geq1}$ is a convergent sequence for any $m \geq 2$, and
		\[ \lim_{s \rightarrow \infty} \depth(R/(I^{[m]})^{(s)})= n- \dim(F_{\mathfrak{s}}(I^{[m]})).\]
	\end{thm}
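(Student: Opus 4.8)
The plan is to reduce the statement to the analogous one for the ordinary symbolic powers of $I$, apply Nguyen and Trung's theorem \cite[Theorem 3.3]{TrungInv} recalled above, and then verify that replacing $I$ by $I^{[m]}$ changes neither the eventual value of the depth nor the dimension of the symbolic fiber cone. Throughout, fix $m\geq 2$.

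First I would record the depth comparison. By Lemma \ref{tech-lem1} one has $(I^{[m]})^{(s)}=(I^{(s)})^{[m]}$ for every $s\geq 2$, and each $I^{(s)}$ is a monomial, hence homogeneous, ideal; so Corollary \ref{cor-depth} gives $\pd(R/(I^{(s)})^{[m]})=\pd(R/I^{(s)})$, and the Auslander--Buchsbaum formula over the polynomial ring $R$ then yields $\depth(R/(I^{[m]})^{(s)})=\depth(R/I^{(s)})$ for all $s\geq 2$. Since $I^{(s)}$ is integrally closed for $s\gg 0$ by hypothesis, Nguyen and Trung's theorem applies to $I$: the function $s\mapsto\depth(R/I^{(s)})$ is convergent with $\lim_{s\to\infty}\depth(R/I^{(s)})=n-\dim F_{\mathfrak{s}}(I)$. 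As a sequence converges exactly when one of its tails does, this already shows $\{\depth(R/(I^{[m]})^{(s)})\}_{s\geq 1}$ is convergent, with limit $n-\dim F_{\mathfrak{s}}(I)$.

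It then remains to prove $\dim F_{\mathfrak{s}}(I^{[m]})=\dim F_{\mathfrak{s}}(I)$, which is where the real work lies. I would use the ring isomorphism $\varPhi\colon R\xrightarrow{\sim}\widetilde{R}=\K[X_1^m,\dots,X_n^m]$, $\varPhi(X_i)=X_i^m$. Extended by $t\mapsto t$, it carries the symbolic Rees algebra $\bigoplus_{s\geq 0}I^{(s)}t^s$ onto $\mathcal{A}:=\bigoplus_{s\geq 0}\varPhi(I)^{(s)}t^s$ (symbolic powers computed in $\widetilde{R}$), taking $\mathfrak{m}$ to $\widetilde{\mathfrak{m}}:=(X_1^m,\dots,X_n^m)\widetilde{R}$. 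By Remark \ref{rmk} together with Lemma \ref{tech-lem1} (and with Lemma \ref{tech-lem0} and Theorem \ref{main-thm1} for the exponents $s\in\{0,1\}$), $(I^{[m]})^{(s)}=(I^{(s)})^{[m]}=\varPhi(I^{(s)})R=\varPhi(I)^{(s)}R$ for all $s\geq 0$; and since $R=\bigoplus_{0\leq a_i<m}X^a\widetilde{R}$ is a free, hence flat, $\widetilde{R}$-module, the symbolic Rees algebra of $I^{[m]}$ over $R$ equals $\mathcal{A}\otimes_{\widetilde{R}}R$. Tensoring down by $\K=R/\mathfrak{m}$ over $R$ and using associativity of the tensor product, one obtains
$$F_{\mathfrak{s}}(I^{[m]})=\big(\mathcal{A}\otimes_{\widetilde{R}}R\big)\otimes_R\K\;\cong\;\mathcal{A}\otimes_{\widetilde{R}}\big(\widetilde{R}/\widetilde{\mathfrak{m}}\big)\;\cong\;F_{\mathfrak{s}}(I),$$
where the first isomorphism uses $R/\mathfrak{m}\cong\widetilde{R}/\widetilde{\mathfrak{m}}$ as $\widetilde{R}$-algebras (the composite $\widetilde{R}\hookrightarrow R\twoheadrightarrow R/\mathfrak{m}$ has kernel exactly $\widetilde{\mathfrak{m}}=\mathfrak{m}\cap\widetilde{R}$) and the second is induced by $\varPhi^{-1}$. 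Hence $\dim F_{\mathfrak{s}}(I^{[m]})=\dim F_{\mathfrak{s}}(I)$, and combined with the previous paragraph this gives $\lim_{s\to\infty}\depth(R/(I^{[m]})^{(s)})=n-\dim F_{\mathfrak{s}}(I^{[m]})$, as wanted.

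The hard part will be this fiber-cone identification: one must keep the several $\widetilde{R}$-, $R$- and $\mathcal{A}$-module structures straight, and I route through the ring isomorphism above rather than through a Hilbert-function comparison of $F_{\mathfrak{s}}(I)$ and $F_{\mathfrak{s}}(I^{[m]})$ precisely because the symbolic Rees algebra of a monomial ideal need not be Noetherian. A more computational alternative, avoiding tensor products altogether, is to exhibit the graded $\K$-algebra isomorphism $F_{\mathfrak{s}}(I)\cong F_{\mathfrak{s}}(I^{[m]})$ directly on monomial bases via $\overline{X^{a}}\mapsto\overline{X^{ma}}$, the only substantive point being the equivalence $X^{a}\in\mathfrak{m}I^{(s)}\iff X^{ma}\in\mathfrak{m}\,(I^{(s)})^{[m]}$, which is immediate from the description of $(I^{[m]})^{(s)}=(I^{(s)})^{[m]}$ given by Theorem \ref{main-thm1}.
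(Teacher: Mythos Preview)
Your proof is correct. For the depth comparison it follows the paper exactly: both use Lemma~\ref{tech-lem1} and Corollary~\ref{cor-depth} to identify $\depth(R/(I^{[m]})^{(s)})$ with $\depth(R/I^{(s)})$ and then invoke \cite[Theorem~3.3]{TrungInv}.

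The genuine difference lies in the step $\dim F_{\mathfrak{s}}(I^{[m]})=\dim F_{\mathfrak{s}}(I)$. The paper does not construct an isomorphism; it simply records that for every $s\geq 0$,
\[
\ell\bigl(I^{(s)}/\mathfrak{m}I^{(s)}\bigr)=\mu(I^{(s)})=\mu\bigl((I^{(s)})^{[m]}\bigr)=\mu\bigl((I^{[m]})^{(s)}\bigr)=\ell\bigl((I^{[m]})^{(s)}/\mathfrak{m}(I^{[m]})^{(s)}\bigr),
\]
so the two fiber cones share the same Hilbert function, and concludes from there. Your base-change argument through $\widetilde{R}$ is more structural: it produces an actual graded $\K$-algebra isomorphism $F_{\mathfrak{s}}(I)\cong F_{\mathfrak{s}}(I^{[m]})$, which is a stronger conclusion and sidesteps the question of whether equal Hilbert functions force equal Krull dimension for these non--standard-graded algebras. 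The ``computational alternative'' you sketch at the end, sending $\overline{X^{a}}\mapsto\overline{X^{ma}}$, is in effect the bijection of minimal monomial generators underlying the paper's $\mu$-count; the paper just stops at the numerical equality rather than assembling the bijection into a ring homomorphism.
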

	\begin{proof}
		It follows from Lemma \ref{tech-lem1} that for any $s \geq 1$, $(I^{[m]})^{(s)}= (I^{(s)})^{[m]}$. Therefore,   $\depth(R/(I^{[m]})^{(s)})=\depth(R/(I^{(s)})^{[m]})$. Now, by Corollary \ref{cor-depth}, \[\depth(R/(I^{[m]})^{(s)})=\depth(R/I^{(s)}).\] Therefore,  \[ \lim_{s \rightarrow \infty} \depth(R/(I^{[m]})^{(s)})= \lim_{s \rightarrow \infty} \depth(R/I^{(s)})=n- \dim(F_{\mathfrak{s}}(I)),\] where the last equality follows from \cite[Theorem 3.3]{TrungInv}. Now, it is enough to prove that $\dim(F_{\mathfrak{s}}(I^{[m]}))=\dim(F_{\mathfrak{s}}(I))$. Note that for each $s \geq 0$, $$\ell\left({I^{(s)}}/{\mathfrak{m}I^{(s)}}\right) =\mu(I^{(s)}) =\mu((I^{(s)})^{[m]})=\mu((I^{[m]})^{(s)}) =\ell\left({(I^{[m]})^{(s)}}/{\mathfrak{m}(I^{[m]})^{(s)}}\right).$$ Hence, the assertion follows.
	\end{proof}
	
	\section*{Acknowledgement}
	The authors are grateful to Prof. Sarang S. Sane, who initially suggested looking at the behavior of the projective dimension of square powers of monomial ideals from where this article grew up. The authors are thankful to Prof. A. V. Jayanthan for his valuable inputs on several drafts and fruitful conversations in respect of making this article. The authors are also thankful to Prof. Rafael Villarreal for many helpful comments and suggestions. The authors feel privileged and fortunate for getting a research environment at the Indian Institute of Technology Madras.
	
\end{document}